\documentclass[a4paper]{amsart}

\usepackage{amsmath,amssymb}
\usepackage{amsthm}
\usepackage{mathtools}
\usepackage{dsfont}
\usepackage{amsfonts}
\usepackage{color}
\usepackage{epsfig}
\usepackage{graphicx}
\usepackage{subcaption}
\usepackage{float}
\usepackage{esint}
\usepackage{comment}
\usepackage{tikz}
\usepackage{pgfplots}
\usepackage[
  hypertexnames=false 
]{hyperref}
\usepackage{nameref} 
\usepackage[ 
    nameinlink 
]{cleveref} 
\usepackage{autonum} 


\hypersetup{
    colorlinks=true,
    linkcolor=blue,
    citecolor=orange,
    filecolor=magenta,      
    urlcolor=cyan,
    pdftitle={Positivity and Asymptotic},
    pdfpagemode=FullScreen,
    }
\urlstyle{same}

\pgfplotsset{width=7cm,compat=1.9}

\setlength{\textwidth}{17cm} 
\setlength{\textheight}{29.5cm}
\addtolength{\textwidth}{-0.5cm}
\addtolength{\textheight}{-5cm}
\parskip 5pt


\calclayout

\newtheorem{Theorem}{Theorem}[section]
\newtheorem{Lemma}{Lemma}[section]
\newtheorem{Proposition}{Proposition}[section]
\newtheorem{Corollary}{Corollary}[section]

\newtheorem{Remark}{Remark}[section]

\renewcommand{\theequation}{\arabic{section}.\arabic{equation}}



\newcommand{\RR}{\mathbb{R}}
\newcommand{\cont}{\mathcal{C}}


\DeclareMathOperator*{\essinf}{ess\,inf}

\begin{document}

\title[Positivity and asymptotic behaviour]{Positivity and asymptotic behaviour of solutions to a generalized nonlocal fast diffusion equation}
\author{Arturo de Pablo, Fernando Quirós and Jorge Ruiz-Cases}

\address{F. Quirós.
  Departamento de Matemáticas, Universidad Autónoma de Madrid,
  28049-Madrid, Spain,
  and Instituto de Ciencias Matemáticas ICMAT (CSIC-UAM-UCM-UC3M),
  28049-Madrid, Spain.}
\email{fernando.quiros@uam.es}

\address{A. de Pablo.
Departamento de Matem\'{a}ticas, Universidad Carlos III de Madrid, 28911 Legan\'{e}s, Spain and ICMAT, Instituto de Ciencias Matem\'aticas (CSIC-UAM-UC3M-UCM), 28049 Madrid, Spain.}
\email{arturop@math.uc3m.es}

\address{J. Ruiz-Cases.
  Departamento de Matem\'aticas, Universidad Aut\'onoma de Madrid, 28049 Madrid, Spain and Instituto de Ciencias Matemáticas ICMAT (CSIC-UAM-UCM-UC3M),
  28049 Madrid, Spain}
\email{jorge.ruizc@uam.es}

\begin{abstract}
  We study the positivity and asymptotic behaviour of nonnegative solutions of a general nonlocal fast diffusion equation,
  \[\partial_t u + \mathcal{L}\varphi(u) = 0,\]
  and the interplay between these two properties. Here $\mathcal{L}$ is a stable-like operator and $\varphi$ is a singular nonlinearity.

  We start by analysing positivity by means of a weak Harnack inequality satisfied by a related elliptic (nonlocal) equation. Then we use this positivity to establish the asymptotic behaviour: under certain hypotheses on the nonlocal operator and nonlinearity, our solutions behave asymptotically as the Barenblatt solution of the standard fractional fast diffusion equation.

  The main difficulty stems from the generality of the operator, which does not allow the use of the methods that were available for the fractional Laplacian. Our results are new even in the case where $\varphi$ is a power.
\end{abstract}

\subjclass[2020]{
  35R11, 
  35K67, 
  35B09, 
  35B40. 
}

\keywords{Nonlocal singular diffusion, stable-like operators, positivity, asymptotic behaviour.}

\maketitle

\section{Introduction and main results}
\setcounter{equation}{0}

\subsection{Goal.}

The aim of this paper is to study the positivity and asymptotic behaviour of weak solutions to the generalized fast diffusion equation
\begin{equation} \label{Cauchy} \tag{P}
  \begin{cases}
    \partial_t u + \mathcal{L}\varphi(u) = 0,  \quad & (x,t) \in Q := \mathbb{R}^N \times (0,\infty), \\
    u(x,0) = u_0(x),                                 & x \in \mathbb{R}^N,
  \end{cases}
\end{equation}
where $\mathcal{L}$ is a stable-like operator, $\varphi$ a singular nonlinearity ($\varphi'(0) = +\infty$), and $u_0$ a nonnegative $L^1(\RR^N)$ function. The main difficulty comes from the operator, since the usual techniques to prove positivity cannot be applied here. Positivity is essential in our proof for the large-time behaviour in this singular case. Let us remark that it was not required to deal with the degenerate case \cite{dePabloQuiros2016}.

We define the operator $\mathcal{L}$, for smooth $f$, as
\begin{equation} \label{operator_def}
  \mathcal{L}f(x) = P.V. \int_{\mathbb{R}^N} (f(x)-f(y)) J(x,y)\, dy,
\end{equation}
where $P.V.$ is the Cauchy principal value, and $J$ is a measurable kernel such that
\begin{equation}\label{nucleo}
  \tag{H$_J$}
  \left\{\begin{aligned}
     & J(x,y) = J(y,x), \quad x,y \in \mathbb{R}^N, \, x \neq y,       \\
     & J(x,x +z) = J(x, x - z),  \quad x,z\in \RR^N, \, z \neq 0 \\
     & \frac{ 1 }{\Lambda |x-y|^{N+\sigma}} \leq J(x,y) \leq \frac{\Lambda}{|x-y|^{N+\sigma}}, \quad x,y \in \mathbb{R}^N, \, x \neq y, 
                                                                       \\
  \end{aligned} \right.
\end{equation}
for some constants $\sigma\in (0,2)$ and $\Lambda >0$. We ask the nonlinearity to satisfy:
\begin{gather} \label{phihypo} \tag{H$_\varphi 1$}
  \varphi \in \cont([0,\infty)) \cap \cont^1((0,\infty)), \quad \varphi(0) = 0, \quad \varphi \text{ strictly increasing}, \\
  \label{nonlinearcond} \tag{H$_\varphi 2$}
  \varphi^{1+A} \text{ concave for some $A>0$.}
\end{gather}
We can assume $\varphi$ to be defined only on $[0,\infty)$ since nonnegativity of the intial datum implies nonnegativity of the solution. Notice that the linear case, $\varphi(s)=s$, does not verify \eqref{nonlinearcond}; and if $\varphi(s)=s^m$, \eqref{nonlinearcond} holds with $0<A \leq \frac{1-m}{m}$, so $0<m<1$. In general, these restrictions force our nonlinearity to become singular at the origin. The above conditions on the operator and nonlinearity are assumed throughout the paper without further mention. Some results will require additional hypotheses.

\begin{Remark}
  We could replace conditions \eqref{phihypo} and \eqref{nonlinearcond} by
  \[\begin{gathered}
      \varphi \in \cont([0,\infty)) \cap \cont^2((0,\infty)),   \\
      \varphi(0) = 0, \quad \varphi'(s)>0,\quad  \text{for all } s>0, \\
      0 < A \leq -\frac{\varphi(s)\varphi''(s)}{(\varphi'(s))^2},
    \end{gathered}\]
  which may be more common in the literature, but are more restrictive.
\end{Remark}

\subsection{Main results.} \label{main_results}

The first result proved in this work is that solutions of our problem are positive in the whole space, unless they vanish identically. Positivity for general solutions follows easily from a comparison argument (truncating the initial datum) once we prove it for bounded solutions. The result for the latter case comes from the following weak Harnack inequality.

\begin{Theorem}[Positivity] \label{positivitytheorem}
  Let $u$ be a bounded weak solution of problem \eqref{Cauchy}. Then, for any $p \in [1, \frac{N}{(N-\sigma)_+})$, $x_0 \in \RR^N$, a.e.\@ $t>0$ and $R>0$,
  \begin{equation} \label{positivityinequality}
    \essinf_{x \in B_{R/2}(x_0)} \varphi(u(x,t)) \geq c\left(\int_{B_R(x_0)} \varphi(u(x,t))^{p} \,dx\right)^{1/p},
  \end{equation}
  where $c>0$ depends on $t, R$ and $\varphi$, as well as on $N$ and $\sigma$.
\end{Theorem}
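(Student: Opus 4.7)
The plan is to fix a time $t > 0$, view $v := \varphi(u(\cdot,t))$ as a nonnegative function on $\RR^N$ which, thanks to the fast-diffusion structure of \eqref{Cauchy}, satisfies a nonlocal elliptic inequality $\mathcal{L} v \geq -f$ with bounded right-hand side, and then invoke the weak Harnack inequality for nonnegative supersolutions of elliptic equations driven by stable-like kernels in the class \eqref{nucleo}.

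The first step is a Bénilan--Crandall type time monotonicity estimate. Hypothesis \eqref{nonlinearcond} is precisely what makes this work. Either by a rescaling/comparison argument (comparing $u(x,t)$ with $\lambda u(x,\mu(\lambda)t)$ and using $\varphi^{1+A}$ concave to order, for $\lambda>1$, the rescaled function against the original), or by differentiating the equation in time and testing with a suitable combination of $t\partial_t u$ and $u$ in the spirit of the classical Bénilan--Crandall proof, one should obtain in the weak sense
\[\partial_t u \leq \frac{K}{t} \quad \text{in } Q, \qquad K = K(A,\|u\|_\infty).\]
Plugging this into the equation yields $\mathcal{L} v(\cdot,t) = -\partial_t u(\cdot,t) \geq -K/t$ on $\RR^N$, so $v$ is a nonnegative weak supersolution with a bounded source.

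Next I would apply the weak Harnack inequality for nonnegative supersolutions of elliptic equations associated with kernels in the class \eqref{nucleo}. This belongs to the recent nonlocal elliptic regularity theory (Kassmann, Felsinger--Kassmann, Di Castro--Kuusi--Palatucci, and others) and yields, for every $p \in [1,N/(N-\sigma)_+)$,
\[\Bigl(R^{-N}\int_{B_R(x_0)} v^p\Bigr)^{1/p} \leq C_1\,\essinf_{B_{R/2}(x_0)} v + C_2\,R^{\sigma}\frac{K}{t},\]
with $C_1,C_2$ depending only on $N,\sigma,\Lambda,p$. A dichotomy between the two terms on the right (whether the $L^p$ mean dominates the source correction or not), together with the a priori bound $\|v\|_\infty \leq \varphi(\|u\|_\infty)$ to handle the second regime, would allow one to absorb the source term and recover \eqref{positivityinequality} with a constant $c$ depending on $t$, $R$ and on $\varphi$ (through $A$, $\Lambda$ and $\|u\|_\infty$).

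The main obstacle is the first step: unlike for the fractional Laplacian, $\mathcal{L}$ carries no exact dilation symmetry, so the Bénilan--Crandall upper bound on $\partial_t u$ cannot be read off from a clean rescaling of the equation. One must either allow approximate rescalings and quantify the error between $\mathcal{L}$ and its dilates using the two-sided bounds in \eqref{nucleo}, or carry out the monotonicity argument intrinsically on the weak formulation---neither of which is routine for a nonlinearity $\varphi$ beyond pure powers. A secondary technical difficulty is that the weak Harnack inequality in the second step must be used in a form whose constants depend only on the ellipticity parameters $N,\sigma,\Lambda,p$ of the whole class \eqref{nucleo}, which forces recourse to the abstract nonlocal theory rather than to Riesz- or heat-kernel computations specific to $(-\Delta)^{\sigma/2}$.
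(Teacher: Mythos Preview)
Your two-step scheme (time-derivative estimate $\Rightarrow$ elliptic supersolution $\Rightarrow$ weak Harnack) is exactly the paper's strategy, but there is one genuine gap, and your ``main obstacle'' turns out not to be one.

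\textbf{The obstacle that isn't.} You worry that B\'enilan--Crandall fails for $\mathcal{L}$ lacking dilation symmetry. The paper sidesteps this entirely: it invokes the abstract result of Crandall--Pierre, valid for any $m$-accretive operator in $L^1$ satisfying a comparison principle, to get that $t\mapsto \varphi(u(t))/t^{1/A}$ is nonincreasing a.e.\ in $x$. No scaling, no approximate dilates, no error quantification---just abstract semigroup theory together with the concavity of $\varphi^{1+A}$.

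\textbf{The actual gap.} The Crandall--Pierre monotonicity gives, formally,
\[
\partial_t u \;\leq\; \frac{1}{A t}\,\frac{\varphi(u)}{\varphi'(u)}\;\leq\;\frac{1}{A\varpi t}\,\varphi(u),\qquad \varpi:=\varphi'(\|u\|_\infty)>0,
\]
so that $w=\varphi(u(\cdot,t))$ satisfies the \emph{homogeneous} inequality $\eta w+\mathcal{L}w\geq 0$ with $\eta=(A\varpi t)^{-1}$. You throw this structure away by recording only $\partial_t u\leq K/t$ with a constant $K$, arriving at the \emph{inhomogeneous} $\mathcal{L}v\geq -K/t$. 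The weak Harnack then produces
\[
\Bigl(R^{-N}\!\int_{B_R} v^p\Bigr)^{1/p}\;\leq\;C_1\,\essinf_{B_{R/2}} v\;+\;C_2\,R^\sigma \frac{K}{t},
\]
and your dichotomy does not close. In the regime where the source term dominates the $L^p$ mean, the inequality gives no positive lower bound on $\essinf v$: knowing that $v\leq\varphi(\|u\|_\infty)$ is an \emph{upper} bound and says nothing about the infimum. Since $v$ could a priori vanish on a set of positive measure (that is exactly what you are trying to exclude), no constant $c>0$ can be extracted this way. By contrast, the homogeneous form $\eta w+\mathcal{L}w\geq 0$ is scale-invariant in $w$, and the Kassmann-type weak Harnack for this equation (which the paper reproves, folding the zero-order term into the Moser iteration) delivers $\essinf_{B_R} w\geq C_{\eta,R}\bigl(\int_{B_R} w^p\bigr)^{1/p}$ directly, with no additive remainder to absorb. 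Keep the right-hand side proportional to $\varphi(u)$ rather than constant, and the rest of your outline goes through.
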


For a given time, and taking $R$ big enough, this inequality will force nonnegative solutions to be positive, as long as the solution is not identically zero. Even though expression \eqref{positivityinequality} is true for a.e.\@ $t>0$, positivity will be true even without the almost everywhere condition for nontrivial solutions. The positivity result will be fundamental in the first part of the asymptotic behaviour proof, since it is required for a regularity argument providing compactness. 

We also include results of conservation of mass and extinction of solutions in finite time. Let us define $m_c = (N-\sigma)_+/N$. It is known that if $\varphi(s)\leq Cs^{m}$ for some $m>m_c$, then there is conservation of mass, see~\cite{dePabloQuiros2016}. We state that if $\varphi(s)\leq Cs^{m_c}$, then \Cref{masscons} also grants that mass is conserved. The proof is new (it was conjectured in \cite{dePabloQuiros2016}) and follows from a simplification of the proof performed in \cite{dePabloQuiros2012} for the fractional Laplacian. When $\varphi'(s) \geq cs^{m-1}$ for $m \in (0, m_c)$, \Cref{extinction} states that solutions become identically zero in finite time. This was expected, but we have not found a proof in the nonlocal literature unless $\mathcal{L} = (-\Delta)^{\sigma/2}$ and $\varphi(s) = s^m$, see \cite{dePabloQuiros2012}, so we include it for completeness. Both results are included in the \hyperref[conservationofmass]{Appendix}. Notice that, thanks to the positivity result, nontrivial nonnegative solutions are positive for every $t>0$ if there is conservation of mass ($m \geq m_c$), and they will be positive until a possible extinction time otherwise. 

Our next result is concerned with the long-time behaviour of solutions. We know, thanks to a smoothing effect (see \Cref{smoothing_1}), that our solutions go asymptotically to zero at least as $t^{-\alpha}$ for some $\alpha$ depending on $J$ and $\varphi$. We want to know how they do so: the \textit{intermediate asymptotics}. We will focus on the case where $\mathcal{L} \sim (-\Delta)^{\sigma/2}$ for far away interactions and $\varphi'(s) \sim s^{m-1}$ for small $s$ and some $m_c<m<1$. Under these hypotheses, our solution will behave like the fundamental solution (from now on, Barenblatt solution) with the same mass of the fractional fast diffusion equation with exponent $m$.

\begin{Theorem}[Asymptotic behaviour] \label{asymptoticbehaviour}
  Let $u$ be a bounded weak solution of \eqref{Cauchy}, with $M=\int_{\mathbb{R}^N} u_0$. Let $J$ and $\varphi$ satisfy
  \begin{align}
     & \lim_{|x-y|\rightarrow \infty} |x-y|^{N+\sigma} J(x,y) = c_1>0, \label{limitJ}                  \\
     & \lim_{s\rightarrow 0^+} s^{1-m} \varphi'(s) = c_2>0 \quad \text {for some }  m \in (m_c, 1), \label{limitphi}
  \end{align}
  then
  \[\|u(t) - B_M( t) \|_{L^1(\RR^N)} \rightarrow 0 \text{ as }t\rightarrow \infty, \qquad t^\alpha \|u( t) - B_M( t) \|_{L^\infty(B_{R t^{\alpha/N}})} \rightarrow 0 \text{ as }t\rightarrow \infty,\]
  where $\alpha = \frac{N}{N(m-1) + \sigma}$ and $B_M$ is the unique weak solution of
  \begin{equation} \label{Barenblatteq}
    \left\{\begin{aligned}
       & \partial_t B_M + \kappa(-\Delta)^{\sigma/2}(B_M^m) = 0 &  & \text{ in } Q,            \\
       & B_M( 0) = M\delta_0                                    &  & \text{ in } \mathbb{R}^N,
    \end{aligned}\right.
  \end{equation}
  for $\delta_0$ the unit Dirac mass placed at the origin, $\kappa = \kappa (c_1,c_2,N,\sigma)$.
\end{Theorem}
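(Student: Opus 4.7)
The plan is to follow a rescaling/compactness/uniqueness scheme of Kamin--Vázquez type. For $\lambda\ge 1$, introduce the rescaled family
\[
  u_\lambda(x,t) = \lambda^\alpha\, u\bigl(\lambda^{\alpha/N} x,\,\lambda t\bigr),
\]
which preserves the total mass $M$ (conserved by the original solution because $m>m_c$, see \Cref{masscons}) and satisfies $\partial_t u_\lambda + \mathcal{L}_\lambda\varphi_\lambda(u_\lambda)=0$ with
\[
  J_\lambda(x,y) = \lambda^{\alpha(N+\sigma)/N}\, J\bigl(\lambda^{\alpha/N}x,\lambda^{\alpha/N}y\bigr),\qquad \varphi_\lambda(s) = \lambda^{-\alpha m}\,\varphi(\lambda^{-\alpha}s).
\]
The choice $\alpha = N/(N(m-1)+\sigma)$ is dictated by requiring both terms of the equation to scale with the same power of $\lambda$. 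Using \eqref{limitJ} directly, and noting that \eqref{limitphi} integrates to $\varphi(s)\sim (c_2/m)s^m$ as $s\to 0^+$, one verifies pointwise
\[
  J_\lambda(x,y)\longrightarrow \frac{c_1}{|x-y|^{N+\sigma}},\qquad \varphi_\lambda(s)\longrightarrow \frac{c_2}{m}\,s^m \quad\text{as }\lambda\to\infty,
\]
so that the natural limit equation is precisely \eqref{Barenblatteq} with $\kappa$ the product of $c_2/m$ and the constant converting the far-field kernel to $(-\Delta)^{\sigma/2}$.

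It suffices to prove $u_\lambda(\cdot,1)\to B_M(\cdot,1)$ in $L^1(\RR^N)$ and in $L^\infty(B_R)$ for every $R>0$, since undoing the rescaling at $t=1$ gives precisely the two convergences of the theorem. My first step is to establish uniform compactness of $\{u_\lambda\}$ on every slab $\RR^N\times[\tau,T]$. The smoothing effect (\Cref{smoothing_1}) supplies a uniform $L^\infty$ bound of order $t^{-\alpha}$, and a standard cutoff computation combined with mass conservation yields a uniform tail estimate, so that $\{u_\lambda(\cdot,t)\}$ is tight in $L^1(\RR^N)$. The crucial new ingredient is \Cref{positivitytheorem}: applied to $u_\lambda$, whose data $J_\lambda$ and $\varphi_\lambda$ satisfy the running hypotheses \eqref{nucleo}--\eqref{nonlinearcond} with constants independent of $\lambda$, it provides a strict positive lower bound $u_\lambda\ge\delta(K)>0$ on every compact $K\subset Q$. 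On $K$ the equation then becomes uniformly (nonlocally) parabolic with $\varphi_\lambda'(u_\lambda)$ bounded between two positive constants, and known Hölder regularity for nonlocal parabolic equations of this form delivers equicontinuity uniform in $\lambda$. Ascoli--Arzelà extracts a subsequence with $u_{\lambda_n}\to v$ locally uniformly in $Q$.

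The second step identifies $v$ by passing to the limit in the weak formulation tested against an arbitrary $\eta\in C_c^\infty(Q)$. The time term is immediate. The main technical obstacle is the nonlocal term, since the kernels $J_\lambda$ converge only in the far field. The remedy is the standard splitting at scale $|x-y|<\varepsilon$ in the double integral defining $\mathcal{L}_\lambda\eta$: the upper bound of \eqref{nucleo} (transferred to $J_\lambda$) together with the smoothness of $\eta$ controls the diagonal contribution uniformly in $\lambda$, while dominated convergence handles the complementary region, so $\mathcal{L}_\lambda\eta\to (c_1/C_{N,\sigma})(-\Delta)^{\sigma/2}\eta$ in $L^1_{\mathrm{loc}}$. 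The locally uniform convergence $\varphi_\lambda(u_\lambda)\to (c_2/m)v^m$ follows from the uniform two-sided bounds on $u_\lambda$ on $\operatorname{supp}\eta$ and the pointwise convergence of $\varphi_\lambda$ on compact subsets of $(0,\infty)$. The initial trace $v(0)=M\delta_0$ is then read off from mass conservation, tightness, and the $t^{-\alpha}$ decay, which together force concentration as $t\to 0^+$.

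Uniqueness of the fundamental solution of the fractional fast diffusion equation in the range $m>m_c$ is classical, so $v=B_M$, and hence the full family (not merely a subsequence) converges. Scheffé's lemma combined with mass conservation upgrades the locally uniform convergence to convergence in $L^1(\RR^N)$, while the $L^\infty(B_R)$ statement at $t=1$ is precisely the locally uniform convergence on $B_R$. Translating both back to the original variables yields the two convergences of the theorem.
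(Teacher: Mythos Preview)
Your proposal is correct and follows essentially the same route as the paper: rescale, use the smoothing effect plus \Cref{positivitytheorem} for uniform two-sided bounds and hence uniform H\"older regularity, extract a subsequential limit, pass to the limit in the very weak formulation, identify the initial trace as $M\delta_0$ via tail control, and conclude by uniqueness of the Barenblatt solution. Two small points: your formula for $\varphi_\lambda$ has a sign error (it should read $\varphi_\lambda(s)=\lambda^{+\alpha m}\varphi(\lambda^{-\alpha}s)$, as your own limit $\varphi_\lambda(s)\to(c_2/m)s^m$ confirms); and when passing to the limit in the nonlocal term, local uniform convergence of $\varphi_\lambda(u_\lambda)$ on $\operatorname{supp}\eta$ is not quite enough, since $\mathcal{L}_\lambda\eta$ is not compactly supported---the paper handles this by invoking weak* convergence in $L^\infty$ against $(-\Delta)^{\sigma/2}\eta\in L^1(\RR^N)$, together with the uniform bound $|\mathcal{L}_\lambda\eta(x)|\le C(1+|x|)^{-(N+\sigma)}$ and dominated convergence for the difference $(\mathcal{L}_\lambda-(-\Delta)^{\sigma/2})\eta$. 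Your use of Scheff\'e's lemma for the $L^1$ convergence is a clean alternative to the paper's direct tail-splitting argument.
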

\begin{Remark}
  \emph{(i)} The boundedness condition is no restriction if $\varphi'(s)\geq c s^{m-1}$ for $s>0$, since solutions become instantaneously bounded thanks to a smoothing effect, see \Cref{smoothing_1}. 

  \noindent\emph{(ii)}  Observe that conditions \eqref{limitJ} and \eqref{limitphi} imply that only long-range interactions (the tail of the kernel) and the behaviour of the nonlinearity near the origin matter. The intuitive reasoning behind this is that mass is moving far away and solutions are going to zero.

  \noindent\emph{(iii)} We expect the $L^\infty$ result to be true in the whole space (see \cite{VazquezBarenblattnolocal}), but we have not found a way to extend the proof out of these growing domains. Useful tools like uniform Hölder estimates (see the degenerate case~\cite{dePabloQuiros2016}) and local regularizing effects (see \cite{VazquezBarenblattlocal}) would be enough to complete the proof, but are unavailable to us at the moment.

  \noindent\emph{(iv)} Observe that we do not ask the kernel $J$ of operator $\mathcal{L}$ to be of convolution type, in contrast to the asymptotic behaviour result for the degenerate case in~\cite{dePabloQuiros2016}. However, a careful inspection of the proof there shows that it can be adapted without much effort to cope with the general operators we use here.
\end{Remark}

\subsection{Outline of the proofs.}

For the positivity result, we show that bounded weak solutions of our problem are supersolutions to a linear nonlocal elliptic equation, using a result from \cite{CrandallMichaelPierre1982} that gives a bound for $\partial_t u$. Then, we adapt ideas from \cite{Kassmann2009} and \cite{KassmannFelsinger2013} to show that these supersolutions satisfy weak Harnack estimates. The technique described here also works in the local case.

On the other hand, our proof only works for singular diffusion equations, because it uses estimates which are only true for singular nonlinearities. These estimates cannot be adapted to prove the positivity for a degenerate nonlinearity. Besides, if an analogous method worked for a nonlocal degenerate diffusion equation, it would also work for the local case, which is clearly impossible because of the finite speed of propagation property, see for instance \cite{VazquezPorousMedium}. The degenerate diffusion case for stable-like operators is still open, and it should be proved in an inherently nonlocal way.

For the asymptotic behaviour result, we define rescalings of our solution and show that they converge to a solution $v$ of the standard fractional fast diffusion equation. Then we have to prove that this $v$ has $M\delta_0$ as initial condition. The main difficulty here is that the rescaled functions are not solutions of the original equation, but of some rescaled version of it. Thus, we cannot exploit the comparison principle, which is usually employed in this setting.

\subsection{Precedents.}

The basic theory of weak solutions for problem \eqref{Cauchy}, including existence, uniqueness and comparison principles, is contained in \cite{dePabloQuiros2012} when $\mathcal{L}=(-\Delta)^{\sigma/2}$ and $\varphi(s)= s^m$, and in \cite{dePabloQuiros2016} for the general case.

For the local slow diffusion equation, $\partial_t u = \Delta u^m$ with $m>1$, solutions have finite speed of propagation: if we start with a nonnegative compactly supported initial condition, the support of the solution stays compact for any later time. This is in contrast with the heat ($m=1$) and fast diffusion ($0<m<1$) equations, where the solution becomes instantaneously positive for any nonnegative initial condition. For more information on the local theory, see \cite{VazquezPorousMedium} and \cite{VazquezFastDiffusion}.

In the nonlocal case with $\mathcal{L} = (-\Delta)^{\sigma/2}$, it is already known that, both in the slow and fast diffusion cases, nonnegative solutions are positive. A proof of qualitative positivity can be found in \cite{dePabloQuiros2012}, while quantitative estimates are obtained in~\cite{BonforteVazquezQuantitativeEstimates}. Both methods differ from what we present here. In particular, even though quantitative estimates would be desired, the result found in~\cite{BonforteVazquezQuantitativeEstimates} requires the use of a type of Aleksandrov-Serrin reflection principle, which is not expected to hold for general operators such as ours without the necessary translational and rotational symmetries.

Harnack inequalities have always been a standard tool in PDEs, and have been thoroughly studied in the nonlocal setting too. We mainly rely on results and ideas from \cite{Kassmann2009} and \cite{KassmannFelsinger2013}. The novelty here is passing to the elliptic equation and then using the Harnack's inequality there to prove positivity. The original method followed here comes from Moser \cite{MoserEllipticHarnack}, see also \cite{GilbargTrudingerbook}. The idea of using estimates for $\partial_t u$ and Moser iterations in order to extend positivity has been done before in the context of local parabolic equations, see \cite{DiBenedettoHarnackBook} and, for a more recent example, \cite{LoeperQuiros2023}.

Regarding conservation of mass, the case where $m>m_c$ was proved in \cite{dePabloQuiros2016}, with the case $m = m_c$ left as a conjecture. Extinction of solutions in finite time was stated for $\mathcal{L} = (-\Delta)^{\sigma/2}$ and $\varphi(s) = s^m$ in \cite{dePabloQuiros2012} without proof, since it follows easily from the local case, see \cite{BenilanCrandallContinuousDependence}. Here we follow the same scheme of \cite{BenilanCrandallContinuousDependence}, with the necessary modifications.

An intermediate asymptotics result was already known for a general nonlocal degenerate equation, as shown in \cite{dePabloQuiros2016}. The proof presented there does not work for the fast diffusion case, and while an adaptation of that method can be attempted, here we present an alternative that should work for both slow and fast diffusion equations. The proof follows the scaling plus compactness method scheme presented for the Laplacian \cite{VazquezBarenblattlocal} and fractional Laplacian \cite{VazquezBarenblattnolocal}, but with the already mentioned added difficulty that both the operator and the nonlinearity change with the scalings. Instead we will mostly take advantage of the uniform control of the tails of our solutions. This kind of result can also be seen in a comment made in \cite{VazquezBarenblattlocal}, where it is mentioned as a curiosity.

\section{Preliminaries}
\setcounter{equation}{0}

In this section we establish several definitions and concepts that will be useful all throughout the paper.

\subsection{On the operator.}

Definition \eqref{operator_def} of the operator $\mathcal{L}$ may not make sense without the conditions~\eqref{nucleo} on the kernel, even for smooth, compactly supported functions. The second restriction of \eqref{nucleo}, $J(x,x+y) = J(x, x-y)$, is not necessary whenever $\sigma \in (0,1)$, and thus \eqref{operator_def} makes sense for $f \in \cont_c^{\sigma+\varepsilon}(\RR^N)$. Notice that, using that symmetry the operator has a pointwise expression, without the \textit{principal value}, in terms of second differences,
\begin{equation} \label{second_differences}
  \mathcal{L}f(x) =  \int_{\RR^N} \left(  f(x) - \frac{f(x+z) + f(x-z)}{2} \right) J(x,x-z)\, dz.
\end{equation}

In order to define the action of the operator $\mathcal{L}$ in a weak sense, we consider the bilinear form (nonlocal interaction energy)
\begin{equation}\label{energy_def}
  \mathcal{E}(f,g) = \frac{1}{2} \int_{\RR^N}\int_{\RR^N} (f(x)-f(y))(g(x)-g(y)) J(x,y) \, dxdy,
\end{equation}
and the quadratic form $\overline{\mathcal{E}}(f) = \mathcal{E}(f,f)$. For $f,g \in \cont^2_c(\RR^N)$ we have
\[\langle \mathcal{L}f , g \rangle = \mathcal{E}(f,g),\]
where $\langle \cdot, \cdot \rangle$ is the standard $L^2(\mathbb{R}^N)$ scalar product. As is expected, less regularity is required for the bilinear form to make sense compared to the operator. Notice that the second condition of \eqref{nucleo} is not necessary for the bilinear form to make sense. Nevertheless, it is needed in order to define the equation in a very weak sense, see below.

If $J(x,y) = \mu_{N,\sigma} |x-y|^{-(N+\sigma)}$, with the normalization constant
\[\mu_{N,\sigma} = \dfrac{2^{\sigma-1}\sigma\Gamma((N+\sigma)/2)}{\pi^{N/2}\Gamma(1-\sigma/2)},\]
the operator $\mathcal{L}$ coincides with the well known \textit{fractional Laplacian}, $\mathcal{L} = (-\Delta)^{\sigma/2}$. The bounds imposed in~\eqref{nucleo}, imply that the energy space needed to deal with weak solutions coincides with the standard \textit{fractional homogeneous Sobolev space} $\dot{H}^{\sigma/2}(\RR^N)$. We denote
\[\begin{aligned}
    [u]_{\dot{H}^{\sigma/2}(\RR^N)} & = \left(\,\iint\limits_{\RR^N \times \RR^N} \frac{(u(x)-u(y))^2}{|x-y|^{N+\sigma}}\, dxdy\right)^{1/2}, \\
    [u]_{\text{BMO}(\RR^N)}         & =  \sup_{\substack{x_0 \in \RR^N                                                                        \\ r>0 }} \frac{1}{|B_r|} \int\limits_{B_r(x_0)} \left| u(x) - \int_{B_r(x_0)} u(y) dy  \right| \, dx, \\
    [u]_{\cont^{0,\alpha}(\RR^N)}   & = \sup_{\substack{x,y\in \RR^N                                                                          \\ x\neq y}}  \frac{|u(x)-u(y)|}{|x-y|^\alpha}.
  \end{aligned}\]
We have the inequalities
\begin{equation} \label{sobolevinequalities}
  \begin{aligned}
    \|u\|_{ L^{\frac{2N}{N-\sigma} }(\RR^N)} \leq C [u]_{\dot{H}^{\sigma/2}(\RR^N)} \quad \text{ if } N > \sigma,  \\
    [u]_{\text{BMO}(\RR^N)}\leq C [u]_{\dot{H}^{\sigma/2}(\RR^N)} \quad \text{ if } N = \sigma,                    \\
    [u]_{\cont^{0, \frac{\sigma-N}{2}}(\RR^N)}\leq C [u]_{\dot{H}^{\sigma/2}(\RR^N)} \quad \text{ if } N < \sigma, \\
  \end{aligned}
\end{equation}
where the constants depend only on $N$. For $N>\sigma$, elements of $\dot{H}^{\sigma/2}(\RR^N)$ are well defined $L^{\frac{2N}{N-\sigma}}$ functions, but for $N\leq \sigma$ they are functions defined modulo a constant. This ambiguity may pose a problem when trying to change from the weak to the very weak formulation. However, it works without any problem under additional integrability conditions that $u$ will verify thanks to being a solution. Observe that $N\leq \sigma$ can only happen when $N=1$.
For more information about fractional homogeneous Sobolev spaces see \cite{FractionalHomogeneousBrascoGomezVazquez}.

\subsection{On the concept of solution.}

A \textit{weak solution} to the Cauchy problem \eqref{Cauchy} is a function $u \in \cont([0,\infty) : L^1(\RR^N))$ with $\varphi(u) \in L^2_\text{loc}((0, \infty) : \dot{H}^{\sigma/2}(\RR^N))$ such that
\begin{equation}\label{weakuation}
  \int_0^\infty \int_{\RR^N} u \partial_t \zeta \, dxdt - \int_0^\infty \mathcal{E}(\varphi(u), \zeta) \,dt = 0
\end{equation}
for every $\zeta \in \cont^\infty_c(Q)$ and taking the intial datum $u(\cdot, 0) = u_0$ almost everywhere. This equation has a comparison principle. In particular, $0 \leq u_0  \leq \mathcal{M}$ implies $0\leq u(t)\leq \mathcal{M}$ a.e.\@ in $\RR^N$ for all $t>0$. Another concept of solution is that of a \textit{very weak solution}, which is a function $u \in \cont([0,\infty) : L^1(\RR^N))$ with $\varphi(u) \in L^2_\text{loc}((0, \infty) : L^1(\RR^N, \rho dx))$, where $\rho(x) = (1+|x|^2)^{-(N+\sigma)/2}$, such that
\begin{equation}\label{veryweakuation}
  \int_0^\infty \int_{\RR^N} u \partial_t \zeta \, dxdt - \int_0^\infty \varphi(u) \mathcal{L}\zeta \,dt = 0,
\end{equation}
for every test $\zeta$. We will only use the concept of very weak solution as an intermediate step, so we will not pay much attention to the above weighted space because bounded solutions trivially belong to it. We also derive the following useful expression,
\begin{equation}\label{salto_tiempo}
  \int_{\mathbb{R}^N} (u(t+h)-u(t)) \phi(x) = - \int_t^{t+h} \mathcal{E}(\varphi(u(s)), \phi)\, ds,
\end{equation}
from taking a test function in the form $\zeta(x,s) = \phi(x) \mathds{1}_{[t,t+h]}(s)$ in \eqref{weakuation}, with $\phi \in \cont^\infty_c(\RR^N)$ and $\mathds{1}_{[t,t+h]}$ the characteristic function of the interval $[t,t+h]$ (in fact, take a suitable approximation of $\mathds{1}_{[t,t+h]}$ and pass to the limit). It is used in several calculations throughout the paper.

\subsection{On the smoothing effect.}
Regarding boundedness of solutions, under the condition that $\varphi'(s) \geq cs^{m-1}$ for $s \in (0,\infty)$, we get that solutions satisfy an $L^1$--$L^\infty$ \textit{smoothing effect}.

\begin{Theorem}[Smoothing effect]\label{smoothing_1}
  Let $0<\sigma<2$ and $\varphi'(s) \geq cs^{m-1}$ for $m>m_c = (N-\sigma)_+/N$ and some constant $c>0$. Then for any $u_0 \in L^{1}(\RR^{N})$, the weak solution of \eqref{Cauchy} verifies
  \begin{equation} \label{smoothingeq}
    \|u(t)\|_\infty \leq C t^{-\alpha}\|u_0\|_{1}^{\gamma}, \quad \text{ for all } t>0,
  \end{equation}
  with $\alpha = N /(N(m-1)+\sigma)$ and $\gamma = \sigma \alpha / N$, where the constant $C$ depends only on $m, N$, and $\sigma$.
\end{Theorem}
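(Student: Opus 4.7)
The plan is to adapt the classical Moser iteration proof of $L^1$–$L^\infty$ smoothing for nonlinear nonlocal diffusions, following the strategy used in~\cite{dePabloQuiros2012} for the pure fractional model. The kernel bound in~\eqref{nucleo} and the pointwise hypothesis $\varphi'(s) \geq c s^{m-1}$ allow us to reduce all estimates, at the level of inequalities, to the pure fractional fast diffusion equation $\partial_t u + (-\Delta)^{\sigma/2} u^m = 0$, whose dilational symmetry forces exactly the exponents $\alpha = N/(N(m-1)+\sigma)$ and $\gamma = \sigma\alpha/N$.

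First I would derive an $L^p$ energy identity for $p>1$,
\[\tfrac{1}{p}\tfrac{d}{dt}\int_{\RR^N} u^p \,dx + \mathcal{E}(\varphi(u),u^{p-1}) = 0,\]
by using $u^{p-1}$ (suitably truncated) as a test function in \eqref{weakuation} after regularizing the solution. The bilinear form is then lower-bounded via the elementary Stroock–Varopoulos pointwise inequality
\[(\varphi(a)-\varphi(b))(a^{p-1}-b^{p-1}) \geq c_{m,p}\bigl(a^{(p+m-1)/2} - b^{(p+m-1)/2}\bigr)^2,\qquad a,b\geq 0,\]
which follows from $\varphi'(s)\geq c s^{m-1}$ by integration. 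Combined with \eqref{nucleo}, this yields $\mathcal{E}(\varphi(u),u^{p-1}) \geq C\,[u^{(p+m-1)/2}]_{\dot H^{\sigma/2}(\RR^N)}^2$, and the Sobolev embedding in~\eqref{sobolevinequalities} converts this into a lower bound of the form $C\|u\|_{L^q}^{p+m-1}$ with $q = N(p+m-1)/(N-\sigma)$. Interpolating $L^p$ between $L^1$ (whose norm is nonincreasing by the $L^1$-contraction of \eqref{Cauchy}) and $L^q$ then produces a Bernoulli-type differential inequality
\[\tfrac{d}{dt}\|u(t)\|_p^p \leq -C\,\|u_0\|_1^{-\delta}\,\|u(t)\|_p^{p(1+\beta)},\]
with $\beta=\beta(p,m,N,\sigma)>0$ and $\delta=\delta(p,m,N,\sigma)>0$, whose explicit integration gives a first quantitative $L^1$–$L^p$ smoothing. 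Iterating along $p_k = 2^k$ and distributing the time interval $(0,t)$ geometrically yields \eqref{smoothingeq}; the condition $m>m_c$ enters in an essential way, being precisely equivalent to $q/p>1$ uniformly in $p\geq 1$, which is what keeps the accumulated Moser constants and exponents under control as $p_k \to \infty$.

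The main obstacle is the rigorous justification of the energy identity above. For a weak solution merely continuous into $L^1$ (with $\varphi(u) \in L^2_{\mathrm{loc}}(\dot H^{\sigma/2})$), $u^{p-1}$ is not \emph{a priori} admissible as a test function: it lacks compact support and has insufficient integrability. One deals with this by approximating $u$ by bounded smooth solutions of a regularized Cauchy problem (truncating $u_0$, mollifying $\varphi$ and $J$), deriving the identity in the regularized setting, and passing to the limit while keeping inequalities, so that the singular character of $\varphi$ at the origin does not destroy the constants. Carefully tracking the $p$-dependence of $c_{m,p}$ in the Stroock–Varopoulos step is a further bookkeeping point, and it is precisely there where the singular nature ($m<1$) of $\varphi$ is put to good use.
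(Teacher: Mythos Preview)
Your approach is correct and coincides with what the paper does: the paper does not give an independent proof of this theorem but states in the remark immediately following it that the argument is a simple modification of the one in \cite{dePabloQuiros2012} and \cite{dePabloQuiros2017}, which is precisely the Moser iteration via the Stroock--Varopoulos inequality and Sobolev embedding that you outline.

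One minor point worth flagging: your formula $q = N(p+m-1)/(N-\sigma)$ presupposes $N>\sigma$. When $N\leq\sigma$ (possible only for $N=1$) the embedding in \eqref{sobolevinequalities} lands in BMO or a H\"older space rather than an $L^q$ space, so the interpolation step must be replaced by a Nash-type argument; this is exactly the subtlety alluded to in remark \emph{(iii)} after the theorem and is handled in \cite{dePabloQuiros2017}.
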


\begin{Remark}
  \emph{(i)} An analogous estimate is true even if $m\leq m_c$, assuming further integrability on the initial value.

  \noindent\emph{(ii)} The proof is a simple modification of the one performed in \cite{dePabloQuiros2012} and \cite{dePabloQuiros2017} for the fractional Laplacian, as stated in \cite{dePabloQuiros2016}.

  \noindent\emph{(iii)} Even though in \cite{dePabloQuiros2016} the condition that $c|s|^{m-1} \leq \varphi'(s) \leq C|s|^{m-1}$ is required when $N\leq \sigma$, only $\varphi'(s) \geq c|s|^{m-1}$ is actually needed in the proof.
\end{Remark}

\subsection{On the fundamental solution.}

Recall the Barenblatt solution $B_M$, the unique weak solution to \eqref{Barenblatteq}. It has a self-similar structure, namely
\begin{equation}
  B_M(x,t)= t^{-\alpha} B_M(xt^{-\alpha/N}, 1), \quad \alpha = \frac{N}{N(m-1) + \sigma},
\end{equation}
where $B_M(\cdot, 1)$ is a continuous bounded profile, see \cite{VazquezBarenblattnolocal}. The constant $\alpha$ appears in the smoothing effect~\eqref{smoothingeq}, and is positive if and only if $m>m_c$. The initial datum $M\delta_0$ of $B_M$ is attained as a trace in the sense of Radon measures, that is,
\begin{equation} \label{traces}
  \lim_{t\to 0^+}\int_{\RR^N} B_M(x,t) \phi(x) \, dx = \int_{\RR^N}M \phi(x) \, d\delta_0 (x) = M\phi(0),
\end{equation}
for any $\phi \in \cont_c(\RR^N)$. This is standard when considering measures as initial data. It is important to remark that Barenblatt solutions do not exist when $m\leq m_c$. More information about these Barenblatt solutions in the nonlocal case can be found in \cite{VazquezBarenblattnolocal} and in the local case in \cite{VazquezBarenblattlocal}.

\subsection{On the regularity of solutions.}

Bounded weak solutions to problem \eqref{Cauchy} are continuous, under additional assumptions on the nonlinearity $\varphi$. Indeed, if $\beta = \varphi^{-1}$ verifies
\begin{equation}
  \beta'(s) \geq cs^{\frac{1}{m}-1} \quad \text{for all } 0< s  \leq \|\varphi(u)\|_\infty \text{ and some }m \in (0,1) ,
\end{equation}
then the solution is continuous, see \cite{dePabloQuiros2018}. This condition can be interpreted as not allowing the nonlinearity to be too singular at the origin. For Hölder continuity, further conditions need to be asked to control the oscillation of the nonlinearity near the origin. In our case, we are concerned with regularity of positive solutions, so this last control is not required. It is important to notice that the regularity estimates refer to $\varphi(u)$, not the solution $u$ directly.

For the asymptotic behaviour result, \Cref{asymptoticbehaviour}, we will need to check that every function of a certain sequence satisfies the same Hölder estimates so that the sequence is equicontinuous. Hence, we will need to modify slightly the proof in \cite{dePabloQuiros2018} so that it is robust enough for this sequence.

\section{Positivity of solutions}
\setcounter{equation}{0}

To get positivity of solutions, we prove first, with the aid of a result in \cite{CrandallMichaelPierre1982}, that $\varphi(u(t))$ is a supersolution to a linear nonlocal elliptic equation, once time is frozen. Then, we adapt the weak Harnack estimates for supersolutions from \cite{Kassmann2009} and \cite{KassmannFelsinger2013}. Care has to be taken in the case where $N\leq \sigma$, since there we cannot use the standard Hardy-Littlewood-Sobolev inequality. The method described here should be easily extrapolated to other similar equations, such as the standard local fast diffusion equation. As mentioned in the introduction, it is enough to consider bounded solutions.

\subsection{From nonlinear parabolic to linear elliptic.}

This subsection relies on a result by Crandall and Pierre \cite{CrandallMichaelPierre1982} for \textit{mild solutions} to problem \eqref{Cauchy}. These are solutions in the sense of nonlinear semigroup theory, $u(t) = e^{-t\mathcal{L}\varphi} u_0$. Let us note that weak solutions are also mild solutions, which is proved in \cite{dePabloQuiros2011}, \cite{dePabloQuiros2012} and \cite{dePabloQuiros2016}.

To be able to use the necessary result, \cite[Theorem 4]{CrandallMichaelPierre1982}, we extend the definition of the nonlocal operator $\mathcal{L}$ so that it is a densely defined linear operator $\mathcal{L}:D(\mathcal{L}) \subset L^1(\mathbb{R}^N) \rightarrow L^1(\mathbb{R}^N)$ which satisfies
\begin{equation} \label{CP} 
  \begin{aligned}
     & \mathcal{L}\text{ is }m\text{-accretive in }L^1(\mathbb{R}^N) \text{ and } a\leq (I+\lambda \mathcal{L})^{-1}f \leq b \,\text{ a.e.} \\
     & \text{ for } \lambda>0, f\in L^1(\mathbb{R}^N),\text{ }a,b\in \mathbb{R}\text{ and }a\leq f \leq b \,\text{ a.e.}
  \end{aligned}
\end{equation}
A natural way to do this is to is to define the identity $\mathcal{L}u=f$ whenever $\langle u, \mathcal{L}\phi \rangle = \langle f, \phi \rangle$ for every $\phi \in \mathcal{C}^\infty_c(\mathbb{R}^N)$. The domain of the operator is
\begin{equation}
  D(\mathcal{L}) := \{u \in L^1(\mathbb{R}^N): \mathcal{L}u \in L^1(\mathbb{R}^N)\}.
\end{equation}
We know that $\mathcal{C}^\infty_c(\mathbb{R}^N) \subset D(\mathcal{L})$, as any $\phi \in \cont^\infty_c(\RR^N)$ satisfies that
\begin{equation}\label{operatoroncompact}
  |\mathcal{L}\phi(x)| \leq \frac{C_\phi}{(1+|x|^2)^{\frac{N+\sigma}{2}}} \in L^1(\mathbb{R}^N).
\end{equation}
Now, taking into account that $\mathcal{C}^\infty_c(\mathbb{R}^N)$ is dense in $L^1(\mathbb{R}^N)$, we get that $\mathcal{L}$ is densely defined in this space. The fact that our operator satisfies conditions \eqref{CP} can be checked through standard computations.

We state here the required abstract result for mild solutions. Recall the constant $A$ which is defined in condition \eqref{nonlinearcond}.

\begin{Theorem}[\cite{CrandallMichaelPierre1982}]
  If $\mathcal{L}$ satisfies \eqref{CP}, and $u$ is a mild solution to problem \eqref{Cauchy}, then
  \begin{equation}\label{nonincreasing}
    t \mapsto \frac{\varphi(u(t))}{t^{1/A}}
    \text{ is nonincreasing a.e. } x\in \mathbb{R}^N.
  \end{equation}
\end{Theorem}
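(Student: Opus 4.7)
The stated monotonicity is exactly the conclusion of Theorem 4 of \cite{CrandallMichaelPierre1982} restricted to our setting, so the plan is to verify the two hypotheses of that abstract result and then invoke it. First, the extension of $\mathcal{L}$ to $L^1(\RR^N)$ described above is densely defined (since $\cont_c^\infty(\RR^N)\subset D(\mathcal{L})$ via \eqref{operatoroncompact}), and the two items of \eqref{CP} are precisely $m$-accretivity in $L^1$ together with the order-preservation property for the resolvent, in the form that Crandall and Pierre require. Second, the concavity condition they impose on the nonlinearity coincides with our standing hypothesis \eqref{nonlinearcond}. With both hypotheses in hand, the conclusion follows directly.

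For transparency, the strategy behind \cite{CrandallMichaelPierre1982} is the following. Using the Crandall--Liggett exponential formula the mild solution is realised as
\[u(t)=\lim_{n\to\infty}J_{t/n}^n u_0,\qquad J_\lambda:=(I+\lambda\mathcal{L}\varphi)^{-1},\]
and the result is reduced to a discrete monotonicity for the resolvent: if $v_\lambda=J_\lambda v$, then for each $t>0$,
\[\frac{\varphi(v_\lambda)}{(t+\lambda)^{1/A}}\le\frac{\varphi(v)}{t^{1/A}}\quad\text{a.e.}\]
The order-preservation built into \eqref{CP} reduces this, in turn, to producing a supersolution of the resolvent equation whose value is obtained by applying $\beta=\varphi^{-1}$ to the right-hand side, and the concavity of $\varphi^{1+A}$ enters precisely at the step where one verifies the pointwise nonlinear inequality this supersolution has to satisfy. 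Iterating the discrete monotonicity along the Euler sequence and letting the step size tend to zero, using the $L^1$-continuity of mild solutions in $t$, produces the stated monotonicity pointwise a.e.\ in $x$.

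The main difficulty, fully handled in \cite{CrandallMichaelPierre1982}, is the algebraic manipulation that converts the infinitesimal condition ``$\varphi^{1+A}$ concave'' into the pointwise inequality needed for the discrete comparison; the specific kernel $J$ plays no explicit role in that step beyond contributing the abstract properties \eqref{CP}. Consequently, in our application the proof of the present theorem consists entirely of the verification of hypotheses sketched above, and no additional work is needed.
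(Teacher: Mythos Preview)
Your proposal is correct and matches the paper's approach exactly: the paper does not prove this theorem but simply cites it from \cite{CrandallMichaelPierre1982}, after having verified in the preceding paragraphs that the operator satisfies \eqref{CP} and that the nonlinearity satisfies \eqref{nonlinearcond}. Your additional sketch of the Crandall--Pierre argument is a helpful bonus, but the paper itself offers no such sketch and treats the result as a black box.
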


This result gives us a way to change to the elliptic framework, and also tells us that if a solution is eventually positive, it had to be positive all along. This can be seen using \eqref{nonincreasing} with $t_2 \geq t_1 >0$ to get
\begin{equation}\label{previouspositivity}
  \varphi(u(t_1)) \geq \varphi(u(t_2)) \left(\frac{t_1}{t_2} \right)^{1/A}.
\end{equation}

This last fact is very particular to fast diffusion equations. The monotonicity property \eqref{nonincreasing}, formally translates into
\begin{equation}\label{formal}
  \partial_t u \leq \frac{1}{At} \left( \frac{\varphi(u)}{\varphi'(u)} \right).
\end{equation}

Since $\varphi$ is stricly increasing and concave, we can define
\begin{equation}\label{infimumito}
  \varpi = \inf_{0 < s \leq \|u\|_\infty }  \varphi'(s) = \varphi'(\|u\|_\infty)>0.
\end{equation}
Therefore, what we get if we use this condition on our equation \eqref{Cauchy} is that
\[\frac{1}{A\varpi t}\varphi(u(t)) + \mathcal{L}\varphi(u(t)) \geq 0. \]
Hence, for any fixed time $t>0$, the function $w=\varphi(u(t))$ is a supersolution of an elliptic equation. Our aim then is to prove that $w>0$ in $\RR^N$. Let us summarize all of this rigorously.

\begin{Proposition} \label{parabolictoelliptic}
  Let $u$ be a bounded weak solution of $\eqref{Cauchy}$. Then, for a.e.\@ $t>0$, the function $w=\varphi(u(t))$ is a weak supersolution of the elliptic equation
  \begin{equation}\label{elliptic}
    \eta w + \mathcal{L}w = 0,
  \end{equation}
  for $\eta = (A\varpi t)^{-1}$, where $\varpi$ is given by \eqref{infimumito}.
\end{Proposition}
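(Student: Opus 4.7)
First I would note that the goal is to prove that for every nonnegative $\phi \in \cont^\infty_c(\RR^N)$ and a.e.\@ $t > 0$,
\[
\frac{1}{A\varpi t}\int_{\RR^N}\varphi(u(t))\,\phi\,dx + \mathcal{E}(\varphi(u(t)),\phi) \geq 0,
\]
which is the weak form of \eqref{elliptic} with $w = \varphi(u(t))$ and $\eta = (A\varpi t)^{-1}$. The strategy is to couple the Crandall--Pierre monotonicity \eqref{nonincreasing} with a backward time-difference in the weak formulation. Specifically, I apply \eqref{salto_tiempo} with $\zeta = \phi\,\mathds{1}_{[t-h,t]}$ (through a standard smooth approximation), which gives
\[
\int_{\RR^N}(u(t) - u(t-h))\phi\,dx \;=\; -\int_{t-h}^{t}\mathcal{E}(\varphi(u(s)),\phi)\,ds.
\]
At a Lebesgue point of $s\mapsto \mathcal{E}(\varphi(u(s)),\phi)$, dividing by $h$ and letting $h\to 0^+$ sends the right-hand side to $-\mathcal{E}(\varphi(u(t)),\phi)$, so it suffices to bound the left-hand side from above by $h\,(A\varpi t)^{-1}\int_{\RR^N}\varphi(u(t))\phi\,dx$, up to lower-order terms.

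Next I would transfer the monotonicity \eqref{nonincreasing}---a statement about $\varphi(u)/t^{1/A}$---into a pointwise inequality for $u$ itself. For $0 < h < t$, \eqref{nonincreasing} gives pointwise
\[
\varphi(u(t)) - \varphi(u(t-h)) \;\leq\; \varphi(u(t))\left[1 - \left(\tfrac{t-h}{t}\right)^{1/A}\right].
\]
Condition \eqref{nonlinearcond} makes $\varphi$ concave (write $\varphi = g \circ \varphi^{1+A}$ with $g(x) = x^{1/(1+A)}$ concave and nondecreasing on $[0,\infty)$), so the tangent inequality at $u(t)$ gives $\varphi(u(t)) - \varphi(u(t-h)) \geq \varphi'(u(t))(u(t) - u(t-h))$. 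Combined with $\varphi'(u(t)) \geq \varpi$ from \eqref{infimumito}, this yields the pointwise bound
\[
\varpi\,(u(t) - u(t-h))^+ \;\leq\; \varphi(u(t))\left[1 - \left(\tfrac{t-h}{t}\right)^{1/A}\right],
\]
which is trivial at points where the positive part vanishes. Integrating against $\phi \geq 0$, discarding the nonnegative contribution of $\int (u(t) - u(t-h))^-\phi$, dividing by $h$ and using $h^{-1}\bigl[1 - ((t-h)/t)^{1/A}\bigr] \to 1/(At)$ as $h\to 0^+$ produces the needed upper bound in the limit, and combining with the limit of \eqref{salto_tiempo} closes the proof.

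The hard part is precisely this conversion step: \eqref{nonincreasing} controls the increment of $\varphi(u)$ from above, but the weak formulation only sees the increment of $u$, and concavity of $\varphi$ closes this gap only for the positive part $(u(t) - u(t-h))^+$, with no matching bound on the negative part. The argument is rescued by the fact that the supersolution concept is tested against nonnegative $\phi$, so the uncontrolled term $\int (u(t) - u(t-h))^-\phi$ has the favourable sign and may simply be dropped. This is the rigorous substitute for the formal pointwise identity \eqref{formal}, for which the $t$-differentiability of $u$ needed to justify it is not available here.
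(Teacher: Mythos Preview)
Your argument is correct and essentially the same as the paper's: both convert the Crandall--Pierre monotonicity \eqref{nonincreasing} into a pointwise upper bound on the time increment of $u$ via $\varphi'\geq\varpi$, insert this into \eqref{salto_tiempo} with a nonnegative test function, and pass to the limit using Lebesgue differentiation. The paper uses a forward increment $t\mapsto t+h$ together with the Mean Value Theorem (writing $\varphi(u(t+h))-\varphi(u(t))=\varphi'(c_x)(u(t+h)-u(t))$ exactly) instead of your backward increment and concavity tangent inequality; note also that your positive/negative split is not actually needed, since the right-hand side $\varphi(u(t))\bigl[1-((t-h)/t)^{1/A}\bigr]$ is nonnegative and hence $\varpi\,(u(t)-u(t-h))\leq\varphi(u(t))\bigl[1-((t-h)/t)^{1/A}\bigr]$ holds pointwise regardless of the sign of $u(t)-u(t-h)$.
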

\begin{proof}
  Let $t>0$ be fixed. From \eqref{nonincreasing} we deduce that, given $h>0$,
  \[\varphi(u(t+h))(t+h)^{-1/A} - \varphi(u(t))t^{-1/A} \leq 0, \quad\text{for a.e. }x\in \mathbb{R}^N.\]
  Manipulating this expression we get
  \[\big(\varphi(u(t+h))-\varphi(u(t))\big)(t+h)^{-1/A} + \varphi(u(t)) \big((t+h)^{-1/A}-t^{-1/A}\big)\leq 0.\]
  Using the Mean Value Theorem,
  \[\varphi'(c_x) (u(t+h)-u(t)) (t+h)^{-1/A}\leq - \varphi(u(t)) \big((t+h)^{-1/A}-t^{-1/A}\big), \]
  for some $c_x$ between $u(x,t)$ and $u(x,t+h)$. Rearranging the expression again we find
  \[(u(t+h)-u(t)) \leq - \frac{(t+h)^{1/A}}{\varpi} \varphi(u(t)) \big( (t+h)^{-1/A}-t^{-1/A} \big). \]
  Introduce this inequality in expression \eqref{salto_tiempo}, with $\phi\geq 0$, to get
  \[-\frac{(t+h)^{1/A}}{\varpi} \big( (t+h)^{-1/A}-t^{-1/A} \big) \int_{\mathbb{R}^N}\varphi(u(t)) \phi +  \int_t^{t+h} \mathcal{E}(\varphi(u(s)), \phi)\, ds \geq 0. \]
  We divide by $h>0$ and then take the limit when $h\rightarrow 0^+$. The right hand side may not have a limit for every $t>0$. But, since $\varphi(u) \in L^2_{\text{loc}}((0,\infty):H^{\sigma/2}(\mathbb{R}^N))$, we have that $\mathcal{E}(\varphi(u),\phi)\in L^1_{\text{loc}}((0,\infty))$. This is enough to apply Lebesgue's Differentiation Theorem for a.e. $t>0$. Choosing one of the allowed values for $t$ we obtain in the limit
  \[\frac{1}{A\varpi t}\int_{\mathbb{R}^N}\varphi(u(t)) \phi +  \mathcal{E}(\varphi(u(t)), \phi) \geq 0, \]
  as desired.
\end{proof}

\subsection{Elliptic weak Harnack inequality.}

We now prove that nonnegative bounded weak supersolutions of~\eqref{elliptic} satisfy a weak Harnack inequality
\begin{equation}\label{weakHarnack}
  \essinf_{B_{R}} w \geq c \left(\;\int\limits_{B_R}w(x)^{p} \, dx\right)^{1/p},
\end{equation}
for $c>0$ depending on $R$ and $\eta$, and for any $p \in [1, \frac{N}{(N-\sigma)_+})$. The method follows the scheme of \cite{GilbargTrudingerbook} for the local case, but we will mostly rely on the results from \cite{Kassmann2009} and \cite{KassmannFelsinger2013}, which are a nonlocal analog. We can avoid some hypotheses needed in those references since we consider our supersolutions $w$ to be nonnegative in the whole~$\RR^N$. For a solution, not just a supersolution, a complete Harnack inequality can be obtained with an extra step jumping from $p$ to $\infty$.

Let us summarize the steps of the proof. First we want to check that our supersolutions satisfy
\[\left(\;\int\limits_{B_R}w(x)^{-\overline{p}} \, dx\right)^{-1/\overline{p}} \geq c \left(\;\int\limits_{B_R}w(x)^{\overline{p}} \, dx\right)^{1/\overline{p}}, \]
for some $0<\overline{p}<1$. Afterwards, we will bound the left term by the infimum through Moser iterations
\[\essinf_{B_{R}} w \geq c \left(\;\int\limits_{B_{2R}}w(x)^{-\overline{p}} \, dx\right)^{-1/\overline{p}}.\]
Performing Moser iterations in the opposite direction will yield
\[  \left(\;\int\limits_{B_{2R}}w(x)^{\overline{p}} \, dx\right)^{1/\overline{p}} \geq c \left(\;\int\limits_{B_R}w(x)^{p} \, dx\right)^{1/p}\]
for any $p \in [1, \frac{N}{(N-\sigma)_+})$. With these inequalities, we obtain \eqref{weakHarnack} and conclude the proof. For all these results we will need to assume that $w\geq \delta>0$ on some big enough ball, but the estimates will not depend on $\delta$, so at the end we can take the limit $\delta \rightarrow 0$ and the inequalities will remain the same. 

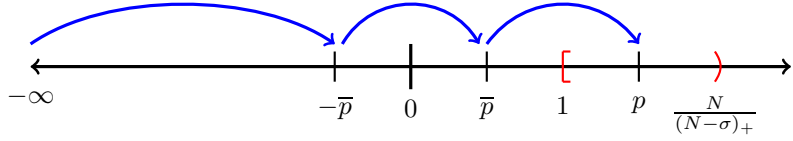
\begin{figure}[H]
  \centering
  \begin{tikzpicture}
    \def\longitudlinea{10}

    \draw[very thick, <->] (0,0) node[below = 4pt]{$-\infty$} -- (\longitudlinea,0);
    \draw[very thick] (\longitudlinea/2, 0.3) -- (\longitudlinea/2, -0.3) node[below] {$0$};
    \draw[thick] (\longitudlinea/2-1, 0.2) -- (\longitudlinea/2-1, -0.2) node[below = 2pt] {$-\overline{p}$};
    \draw[thick] (\longitudlinea/2+1, 0.2) -- (\longitudlinea/2+1, -0.2) node[below = 2pt] {$\overline{p}$};
    \draw[thick] (\longitudlinea/2+3, 0.2) -- (\longitudlinea/2+3, -0.2) node[below = 2pt] {$p$};

    \draw[thick, red] (\longitudlinea/2+2, 0.2) -- (\longitudlinea/2+2, -0.2) node[below = 2pt,black] {$1$};
    \draw[thick, red] (\longitudlinea/2+2, 0.2) -- (\longitudlinea/2+2.1, 0.2);
    \draw[thick, red] (\longitudlinea/2+2, -0.2) -- (\longitudlinea/2+2.1, -0.2);
    \draw[thick, red] (\longitudlinea/2+4, 0.2) .. controls (\longitudlinea/2+4.1, 0.05) and (\longitudlinea/2+4.1, -0.05) .. (\longitudlinea/2+4, -0.2) node[below = 2pt, black] {$\frac{N}{(N-\sigma)_+}$};

    \draw[very thick, blue, ->] (0, 0.3) .. controls (1,1) and (3,1) .. (\longitudlinea/2-1, 0.3);
    \draw[very thick, blue, ->] (4.1, 0.3) .. controls (4.5, 1) and (5.5, 1) .. (5.9, 0.3);
    \draw[very thick, blue, ->] (6, 0.3) .. controls (6.5,1) and (7.5,1) .. (8, 0.3);
  \end{tikzpicture}
  \caption{Sketch of the proof}
  \label{ejeHarnack}
\end{figure}

To simplify our computations we consider $R=1$; through a scaling argument, we will get the results for an arbitrary $R>0$. Unless it is necessary, we will also omit writing the center of the balls, that is $B_r = B_r(x_0)$, since none of the results depend on $x_0$. 

First we need the following auxiliary lemma.

\begin{Lemma}\label{prelemalog}
  Let $0<r\leq 2$ and $w$ be a nonnegative bounded weak supersolution of equation \eqref{elliptic} such that $w\geq \delta$ in $B_{3}$ for some $\delta>0$. Then,
  \begin{equation}
    \iint\limits_{B_r \times B_r} \big(\log(w(x))-\log(w(y))\big)^2 J(x,y)\, dxdy \leq C |B_{2r}| (\eta+ r^{-\sigma})
  \end{equation}
  for some constant $C$ independent of $w$, $\delta$, $r$ and $\eta$.
\end{Lemma}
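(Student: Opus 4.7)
The natural approach is a logarithmic Caccioppoli estimate: test the weak supersolution inequality for $w$ (established in \Cref{parabolictoelliptic}) against $\phi = \psi^2/w$, where $\psi$ is a Lipschitz cutoff with $\psi\equiv 1$ on $B_r$, $\mathrm{supp}\,\psi \subset B_{3r/2}\subset B_3$ (which fits because $r\leq 2$), $0\leq\psi\leq 1$ and $|\psi(x)-\psi(y)|\leq C|x-y|/r$. The assumption $w\geq\delta>0$ on $B_3$ makes $\phi$ bounded and in the energy space $\dot H^{\sigma/2}(\RR^N)$, hence admissible; crucially the final estimate will not involve $\delta$, so the limit $\delta\to 0$ presents no difficulty.

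Plugging $\phi$ into $\eta\int w\phi\,dx + \mathcal{E}(w,\phi) \geq 0$, the zeroth-order term equals $\eta\int\psi^2\,dx\leq C\eta|B_{2r}|$. For the bilinear form I would use the elementary pointwise identity
\[
(a-b)\!\left(\frac{c^2}{a}-\frac{d^2}{b}\right)=(c-d)^2-\Bigl(c\sqrt{b/a}-d\sqrt{a/b}\Bigr)^2, \qquad a,b>0,
\]
evaluated at $a=w(x),\,b=w(y),\,c=\psi(x),\,d=\psi(y)$, which transforms the supersolution inequality into
\[
\iint \Bigl(\psi(x)\sqrt{\tfrac{w(y)}{w(x)}}-\psi(y)\sqrt{\tfrac{w(x)}{w(y)}}\Bigr)^{\!2} J(x,y)\,dxdy \leq \iint(\psi(x)-\psi(y))^2 J(x,y)\,dxdy + 2\eta\int\psi^2\,dx.
\]

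To pass from the symmetric square on the left to $(\log w(x)-\log w(y))^2$, I would establish the pointwise bound
\[
\Bigl(c\sqrt{b/a}-d\sqrt{a/b}\Bigr)^2 \geq \tfrac{1}{2}\min(c,d)^2(\log a-\log b)^2 - C(c-d)^2
\]
by parametrizing $a/b=e^{2\tau}$, decomposing $c=d+(c-d)$ or $d=c+(d-c)$ (choosing according to the sign of $\tau$ so that the accompanying exponential weight stays $\leq 1$), and combining Young's inequality with $|\sinh\tau|\geq|\tau|$. Integrated against $J$, the $(c-d)^2$ error merges into $\iint(\psi(x)-\psi(y))^2 J\,dxdy$, which is controlled by the standard kernel bound $C|B_{2r}|r^{-\sigma}$: split $|x-y|\leq r$ (use the Lipschitz estimate together with $\int_{|z|\leq r}|z|^{2-N-\sigma}dz\leq Cr^{2-\sigma}$) and $|x-y|>r$ (use $|\psi|\leq 1$ and $\int_{|z|>r}|z|^{-N-\sigma}dz\leq Cr^{-\sigma}$). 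Restricting the left-hand side to $B_r\times B_r$, where $\min(\psi(x),\psi(y))=1$, then gives the claimed inequality.

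The main obstacle is the pointwise logarithmic lower bound above: because $w(x)/w(y)$ may be arbitrarily large, no naive comparison with a Dirichlet-type quadratic form works, and the asymmetric decomposition has to be tuned so that the unavoidable exponential weights stay bounded and the resulting error terms can be absorbed into the cutoff energy. A secondary technicality is verifying admissibility of $\phi=\psi^2/w$ (which is exactly why the lower bound $w\geq\delta$ is invoked) while keeping the final constant $C$ independent of $\delta$; this is guaranteed since $\delta$ appears only in the $L^\infty$ norm of the test function, not in any of the integral estimates.
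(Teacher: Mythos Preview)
Your proposal is correct and follows the same route as the paper, which also tests with $\tau^2/w$ and defers the algebraic core to \cite{Kassmann2009}. One minor simplification: since the left-hand integrand $\bigl(\psi(x)\sqrt{w(y)/w(x)}-\psi(y)\sqrt{w(x)/w(y)}\bigr)^2$ is nonnegative, you may restrict to $B_r\times B_r$ \emph{before} invoking any pointwise bound; there $\psi\equiv 1$, so the integrand equals $4\sinh^2\tau\geq(\log w(x)-\log w(y))^2$ directly, and the intermediate $\min(c,d)^2$ estimate with its $(c-d)^2$ error term becomes unnecessary.
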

\begin{proof}
  Select $\tau \in \mathcal{C}^\infty_c(B_{3r/2})$ such that $\tau \equiv 1$ in $B_r$,  $\|\tau\|_\infty \leq 1$ $\|\nabla \tau\|_\infty \leq c/\rho$. The proof follows as in \cite{Kassmann2009}, choosing $\phi=\tau^2/w$ as a test function. In our case we get
  \[-cr^{-\alpha}|B_{2r}| + \iint\limits_{B_r \times B_r} \big(\log(w(x))-\log(w(y))\big)^2 J(x,y)\, dxdy \leq \eta \int\limits_{\mathbb{R}^N}w\tau^2/w \leq \eta |B_{2r}|, \]
  and thus we obtain the result.
\end{proof}

Our next step is to use the previous result to help us prove that $\log(w)$ is a $\text{BMO}$ function. This was an idea of Moser that allows us to connect an integral with a negative power with an integral with a positive one.

\begin{Lemma}\label{loglemma}
  Let $w$ be a nonnegative bounded weak supersolution of \eqref{elliptic} such that $w\geq \delta$ in $B_{3}$ for some $\delta>0$. Then there exists $\overline{p} \in (0,1)$ such that
  \begin{equation}
    \left(\;\int\limits_{B_2}w(x)^{-\overline{p}} \, dx\right)^{-1/\overline{p}} \geq C^{-1/\overline{p}} \left(\;\int\limits_{B_2}w(x)^{\overline{p}} \, dx\right)^{1/\overline{p}},
  \end{equation}
  where $C$ is independent of $w$, $\delta$ and $\eta$, and $\overline{p}$ is independent of $w$ and $\delta$.
\end{Lemma}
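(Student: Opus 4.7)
The plan is to show that $\log w$ lies in $\mathrm{BMO}(B_2)$ with a norm controlled by a function of $\eta$ alone, and then to apply the John--Nirenberg inequality to deduce the simultaneous exponential integrability of $\log w$ and $-\log w$. From there, the two bounds on $\int_{B_2} w^{\overline{p}}$ and $\int_{B_2} w^{-\overline{p}}$ are coupled by their common additive constant $(\log w)_{B_2}$, which is eliminated by multiplying them.

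\smallskip

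Concretely, I would first apply \Cref{prelemalog} to an arbitrary ball $B_\rho(y)\subset B_2$ with $\rho\le 2$. Combined with the lower bound $J(x,z)\ge \Lambda^{-1}|x-z|^{-(N+\sigma)}$ from \eqref{nucleo}, this gives
\[
  \iint\limits_{B_\rho(y)\times B_\rho(y)} \frac{(\log w(x)-\log w(z))^2}{|x-z|^{N+\sigma}}\, dx\,dz \le C\, |B_{2\rho}|\,(\eta+\rho^{-\sigma}).
\]
A standard fractional Poincar\'e inequality on balls then yields
\[
  \fint_{B_\rho(y)} \bigl|\log w - (\log w)_{B_\rho(y)}\bigr|^2\, dx \le C\,(\rho^{\sigma}\eta+1) \le C'\,(\eta+1)
\]
uniformly for $\rho\le 2$ and $y$ such that $B_\rho(y)\subset B_2$. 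By Jensen's inequality this implies $\log w\in\mathrm{BMO}(B_2)$ with $\|\log w\|_{\mathrm{BMO}(B_2)}\le C_0(\eta+1)^{1/2}$, a bound independent of $w$ and $\delta$.

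\smallskip

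Next, I would invoke the John--Nirenberg inequality: there exist universal constants $c_1,c_2>0$ so that for every $\overline{p}\in(0,c_2/\|\log w\|_{\mathrm{BMO}(B_2)})$,
\[
  \int_{B_2} e^{\overline{p}\,|\log w-(\log w)_{B_2}|}\, dx \le C_1\,|B_2|.
\]
Choosing $\overline{p}$ as a function of the bound $C_0(\eta+1)^{1/2}$ alone (and hence independent of $w,\delta$), this gives
\[
  \int_{B_2} w^{\overline{p}}\, dx \le C_1|B_2|\, e^{\overline{p}(\log w)_{B_2}}, \qquad \int_{B_2} w^{-\overline{p}}\, dx \le C_1|B_2|\, e^{-\overline{p}(\log w)_{B_2}}.
\]
Multiplying these inequalities kills the average $(\log w)_{B_2}$, yielding
\[
  \Bigl(\int_{B_2} w^{\overline{p}}\Bigr)\Bigl(\int_{B_2} w^{-\overline{p}}\Bigr) \le C_1^2\,|B_2|^2 =: C,
\]
which is exactly the claimed inequality after raising to the power $1/\overline{p}$ and rearranging. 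Note that $C$ is independent of $\eta$ (it depends only on $|B_2|$ and the John--Nirenberg constants), while $\overline{p}$ is independent of $w,\delta$ but may depend on $\eta$, matching the statement.

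\smallskip

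The main obstacle is the first step: \Cref{prelemalog} is stated with a distinguished center and requires $w\ge\delta$ on a suitably large ball around that center, whereas I need to use it on every subball $B_\rho(y)\subset B_2$. The cleanest way to handle this is to assume (temporarily) that $w\ge \delta$ on a set large enough to cover the supports of all cutoff functions used in those applications, say $B_5$, and note that this is harmless since the resulting BMO bound and the final inequality carry constants independent of $\delta$, so one removes the extra lower bound at the end by a limiting argument. A second delicate point is the fractional Poincar\'e inequality with sharp scaling in $\rho$; this is classical, but must be applied in the form that produces a constant independent of the center $y$.
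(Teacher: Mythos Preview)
Your proposal is correct and follows essentially the same route as the paper's proof: use \Cref{prelemalog} on subballs $B_r(z_0)\subset B_2$, apply a fractional Poincar\'e inequality to get $\|\log w\|_{\mathrm{BMO}(B_2)}\le C\sqrt{1+\eta}$, invoke John--Nirenberg, and multiply the two exponential bounds to cancel $(\log w)_{B_2}$. Your concern about needing $w\ge\delta$ on $B_5$ is unnecessary: since $B_r(z_0)\subset B_2$ forces $r\le 2$ and $|z_0|\le 2-r$, the cutoff in \Cref{prelemalog} is supported in $B_{3r/2}(z_0)\subset B_{2+r/2}\subset B_3$, so the hypothesis $w\ge\delta$ on $B_3$ already suffices.
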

In the proof we obtain the relation $\overline{p} = \dfrac{c}{\sqrt{1+\eta}}$ for some small $c>0$.

\begin{proof}
  The idea consists in proving that $\log(w) \in \text{BMO}(B_2)$ and then using an inequality of John-Niremberg's type. Choose $z_0 \in B_2$ and $r>0$ such that $B_r(z_0) \subset B_2$. We will write $B_r$ instead of $B_r(z_0)$ from now on. Using \Cref{prelemalog}, we get
  \[\iint\limits_{B_r \times B_r} \frac{\big(\log w(x)-\log w(y)\big)^2}{|x-y|^{N+\sigma}}\, dxdy \leq C
    r^N (\eta+ r^{-\sigma}). \]
  Let us write
  \[[\log w]_{B_r} = \frac{1}{|B_r|}\int\limits_{B_r} \log w(x)\, dx. \]
  Then, using Poincaré's inequality we get
  \[\int\limits_{B_r} \big| \log w(x)- [\log w]_{B_r}  \big|^2 \leq cr^\sigma\iint\limits_{B_r \times B_r} \frac{\big(\log w(x)-\log w(y)\big)^2}{|x-y|^{N+\sigma}}\, dxdy  \leq C r^{N}(1+\eta r^{\sigma}). \]
  Applying Hölder's inequality
  \[\int\limits_{B_r} \big| \log w(x)- [\log w]_{B_r} \big| \leq \left(\;\int\limits_{B_r} \big| \log w(x)- [\log w]_{B_r}  \big|^2 \right)^{1/2} |B_r|^{1/2} \leq C r^{N}\sqrt{1+\eta r^{\sigma}}. \]
  Thus, we have proved that
  \[\frac{1}{|B_r|}\int\limits_{B_r} \big| \log w(x)- [\log w]_{B_r} \big| \leq C \sqrt{1+\eta}, \]
  which implies $\log(w) \in \text{BMO}(B_2)$ and $\|\log(w)\|_{\text{BMO}(B_2)} \leq c\sqrt{1+\eta}$. This in turn yields
  \[\int_{B_2} e^{\overline{p}|\log w(x)-[\log w]_{B_2}|}\, dx \leq K, \]
  where $\overline{p} = c (1+\eta)^{-1/2}$ for some small enough constant $c>0$ only depending on the dimension $N$ (see \cite{GrafakosModern}). We assume $\overline{p}$ to be smaller than $1$. To conclude,
  \begin{equation+} \tag*{ }
    \left(\;\int\limits_{B_2} w(x)^{\overline{p}}\,dx\right)\left(\;\int\limits_{B_2} w(x)^{-\overline{p}}\, dx\right)  =  \left(\;\int\limits_{B_2} e^{\overline{p}(\log w(x)-[\log w]_{B_2})} \right) \left(\;\int\limits_{B_2} e^{-\overline{p}(\log w(x)-[\log w]_{B_2})} \right)  \leq K^2. \qedhere
  \end{equation+}
\end{proof}

The dependence of $\overline{p}$ on $\eta$ will imply, after the scaling is performed, that $\overline{p}$ depends on $R$, which did not happen in \cite{Kassmann2009}. This comes from the fact that we have the extra term $\eta w$ in the equation. If we assume $R<R_0$ for some fixed $R_0$ we can avoid this dependence.

With this last result we found the link between integrals with negative and positive powers. Next, we need an anti-Sobolev inequality, which will be required to perform the Moser iterations. But first, we need some algebraic inequalities. They will be a useful substitute to the product differentiation and chain rule used in the local case. Both are stated in \cite{KassmannFelsinger2013}, but the proof of the first one is found in \cite{Kassmann2009}.

\begin{Lemma}
  Let $a,b>0$ and $\tau_1,\tau_2\geq0$. Then,
  \begin{itemize}
    \item[i)] if $q>1$, then
          \begin{equation}\label{tauprimera}
            \begin{aligned}
              (a-b)\left(\frac{\tau_1^{q+1}}{a^q}-\frac{\tau_2^{q+1}}{b^q}\right) \leq - & \frac{\tau_1 \tau_2}{q-1}\left[ \left(\frac{a}{\tau_1}\right)^{\frac{1-q}{2}} -\left(\frac{b}{\tau_2}\right)^{\frac{1-q}{2}}  \right]^2 \\+ \max\left\{1, \frac{6q-5}{2}\right\}& (\tau_1-\tau_2)^2 \left[ \left(\frac{a}{\tau_1}\right)^{1-q} +\left(\frac{b}{\tau_2}\right)^{1-q}  \right].
            \end{aligned}
          \end{equation}
    \item[ii)]If $0<q<1$, then
          \begin{equation}\label{tausegunda}
            \begin{aligned}
              (a-b)\left(\frac{\tau_1^{2}}{a^q}-\frac{\tau_2^{2}}{b^q}\right) \leq - & \frac{2}{3(1-q)}\left( \tau_1 a^{\frac{1-q}{2}} -\tau_2b^{\frac{1-q}{2}}  \right)^2 \\+ &\frac{4q^2 - 9q + 9}{q(1-q)}(\tau_1-\tau_2)^2 \left(a^{1-q} +b^{1-q}  \right).
            \end{aligned}
          \end{equation}
  \end{itemize}
\end{Lemma}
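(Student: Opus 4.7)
My plan is to reduce both inequalities to elementary one-variable estimates via normalization, then verify them by identifying a sharp ``diagonal'' contribution and absorbing ``off-diagonal'' corrections with Young's inequality. Both sides are continuous on $(0,\infty)^{2}\times[0,\infty)^{2}$, so I would first dispose of the degenerate cases $\tau_1=0$ or $\tau_2=0$ by direct inspection and then work under $\tau_1,\tau_2>0$. The full expression is also invariant under the scaling $(a,b,\tau_1,\tau_2)\mapsto(\lambda a,\lambda b,\mu\tau_1,\mu\tau_2)$, which I would exploit to normalize $\tau_1=1$ and $a=1$ and reduce to a two-parameter statement in $(b,\tau_2)$.

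For \eqref{tauprimera} (the case $q>1$), the change of variables $\xi=(a/\tau_1)^{(1-q)/2}$, $\eta=(b/\tau_2)^{(1-q)/2}$ turns the left-hand side into a bilinear expression in $\xi,\eta$ weighted by $\tau_1,\tau_2$. I would first isolate the ``diagonal'' contribution corresponding to $\tau_1=\tau_2$, for which the inequality reduces to
\[\frac{(a-b)(a^{q}-b^{q})}{(ab)^{q}} \;\ge\; \frac{1}{q-1}\bigl(a^{(1-q)/2}-b^{(1-q)/2}\bigr)^{2}.\]
This follows from Cauchy--Schwarz applied to the integral representations $a^{(1-q)/2}-b^{(1-q)/2}=\tfrac{1-q}{2}\int_{b}^{a}t^{-(q+1)/2}\,dt$ and $b^{-q}-a^{-q}=q\int_{b}^{a}t^{-q-1}\,dt$, where the resulting multiplicative factor $\tfrac{q-1}{4q}\le 1$ yields precisely the sharp constant $1/(q-1)$.

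For the correction produced by $\tau_1\ne\tau_2$, the plan is to apply Young's inequality $2xy\le\varepsilon x^{2}+\varepsilon^{-1}y^{2}$ to ``trade'' a controlled fraction of the coercive square for a term of the shape $(\tau_1-\tau_2)^{2}\bigl[(a/\tau_1)^{1-q}+(b/\tau_2)^{1-q}\bigr]$. The asymmetric constant $\max\{1,(6q-5)/2\}$ indicates that the optimal $\varepsilon$ changes regime at $q=7/6$, so I would split the analysis there and balance the constants in each subcase. Part (ii) follows the same blueprint with the substitution $\xi=a^{(1-q)/2}$, $\eta=b^{(1-q)/2}$ (well-defined and positive for $0<q<1$) and the analogous Cauchy--Schwarz applied to $a^{1-q}-b^{1-q}=(1-q)\int_{b}^{a}t^{-q}\,dt$; the different constants $2/(3(1-q))$ and $(4q^{2}-9q+9)/(q(1-q))$ arise from adapting the Young step to the exponent $2$ (rather than $q+1$) on $\tau_i$ in the left-hand side.

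I expect the main technical obstacle to lie not in the algebraic structure but in tracking these sharp constants. A crude splitting immediately yields both inequalities with worse coefficients, so the subtlety is to determine how much of the coercive square must be sacrificed to bound the correction with exactly the stated constants. A secondary obstacle is the case analysis hidden inside the $\max$ in part (i): at the threshold $q=7/6$ the two competing Young balances must agree, and checking this continuity is what pins down the precise form of the constant.
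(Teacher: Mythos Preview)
The paper does not prove this lemma: it merely states the two inequalities and refers to \cite{KassmannFelsinger2013} for both, with the proof of (i) attributed to \cite{Kassmann2009}. There is therefore no in-paper argument to compare your plan against.

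Your diagonal analysis for (i) is correct. With $f=1$, $g=t^{-(q+1)/2}$, Cauchy--Schwarz on $\int_b^a$ gives
\[\frac{(a-b)(a^{q}-b^{q})}{(ab)^{q}}\;\ge\;\frac{4q}{(q-1)^{2}}\bigl(a^{(1-q)/2}-b^{(1-q)/2}\bigr)^{2}\;\ge\;\frac{1}{q-1}\bigl(a^{(1-q)/2}-b^{(1-q)/2}\bigr)^{2},\]
and the two-parameter scaling reduction is valid (both sides of (i) are homogeneous of bidegree $(\lambda^{1-q},\mu^{q+1})$ under $(a,b,\tau_1,\tau_2)\mapsto(\lambda a,\lambda b,\mu\tau_1,\mu\tau_2)$, and similarly for (ii)). The continuity/degenerate-case disposal is also fine.

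What is missing is precisely the part you flag as the main obstacle: the off-diagonal step. Saying that you will ``trade a controlled fraction of the coercive square'' via Young is not yet a proof; you have not specified which splitting of the cross terms you intend, nor shown that the balance produces exactly $\max\{1,(6q-5)/2\}$ in (i) and $\tfrac{2}{3(1-q)}$, $\tfrac{4q^{2}-9q+9}{q(1-q)}$ in (ii). The slack you gain in the diagonal step is a factor $4q/(q-1)$, which is large for $q$ near $1$ but tends to $4$ as $q\to\infty$, while the allowed off-diagonal constant $(6q-5)/2$ grows linearly; whether this is enough depends entirely on the algebra you have not written down. As it stands the proposal is a reasonable strategy rather than a proof, and since the paper itself defers to \cite{Kassmann2009} and \cite{KassmannFelsinger2013}, you should either carry out the Young balance explicitly or, as the paper does, cite those references.
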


\begin{Lemma}
  Let $w$ be a nonnegative bounded weak supersolution of \eqref{elliptic} such that $w(x) \geq \delta$ in $B_3$ for some $\delta>0$. Then, for $r\leq 2$, $\rho<1$ and $q>0$, $q\neq 1$,
  \begin{equation} \label{antiHLS}
    \iint_{B_r \times B_r} \frac{\left(w(x)^{\frac{1-q}{2}}-w(y)^{\frac{1-q}{2}}\right)^2}{|x-y|^{N+\sigma}} \,dy dx \leq C(q) (1+\eta)\rho^{-\sigma} \int_{B_{r+\rho}} w(x)^{1-q} \,dx,
  \end{equation}
  for some $C(q)>0$ satisfying $C(q) \leq cq^2$ for large $q$.
\end{Lemma}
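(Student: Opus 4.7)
The plan is to run a fractional Caccioppoli test argument: insert an appropriate negative power of $w$, localized by a cutoff, into the weak supersolution inequality from \Cref{parabolictoelliptic}, and then use the algebraic inequalities \eqref{tauprimera}--\eqref{tausegunda} to recognize the energy produced as (a lower bound for) the desired fractional seminorm of $w^{(1-q)/2}$. Concretely, I would fix a standard cutoff $\tau \in \cont^\infty_c(B_{r+\rho})$ with $\tau \equiv 1$ on $B_r$, $0 \leq \tau \leq 1$, and $|\nabla \tau| \leq C/\rho$, and use as test function $\phi := \tau^{q+1}/w^q$ in the case $q>1$, or $\phi := \tau^2/w^q$ in the case $0<q<1$ (precisely the shapes for which \eqref{tauprimera} and \eqref{tausegunda} are tailored). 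Both are admissible: the hypothesis $w \geq \delta$ on $B_3 \supset \mathrm{supp}\,\tau$ makes $\phi$ bounded and in $\dot{H}^{\sigma/2}(\RR^N)$, and $\phi \geq 0$. Since $w$ is a weak supersolution of \eqref{elliptic}, this yields
\[
\mathcal{E}(w,\phi) \geq -\eta\int_{\RR^N} w\,\phi.
\]

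Next, I would apply \eqref{tauprimera} (respectively \eqref{tausegunda}) pointwise inside the integrand of $\mathcal{E}(w,\phi)$, with $a=w(x)$, $b=w(y)$, $\tau_1=\tau(x)$, $\tau_2=\tau(y)$. On $B_{r+\rho}\times B_{r+\rho}$ this gives a principal negative contribution of the form
\[
-\frac{c}{|q-1|}\iint_{B_{r+\rho}\times B_{r+\rho}} \tau(x)\tau(y)\Bigl[\bigl(w(x)/\tau(x)\bigr)^{(1-q)/2}-\bigl(w(y)/\tau(y)\bigr)^{(1-q)/2}\Bigr]^2 J(x,y)\,dx\,dy,
\]
which on $B_r \times B_r$ (where $\tau\equiv 1$) is exactly the seminorm appearing on the left of \eqref{antiHLS}. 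The complementary ``cutoff error'' is $C(q)(\tau(x)-\tau(y))^2\bigl(w^{1-q}(x)+w^{1-q}(y)\bigr)$, whose integral against $J$ is bounded by $C(q)\rho^{-\sigma}\int_{B_{r+\rho}} w^{1-q}$ via the standard estimate $\int_{\RR^N} (\tau(x)-\tau(y))^2\,|x-y|^{-N-\sigma}\,dy \leq C\rho^{-\sigma}$, valid uniformly in $x$. For the cross region where exactly one of $x,y$ lies in $B_{r+\rho}$, the vanishing of $\tau$ on the outside together with $w\geq 0$ and $\phi\geq 0$ yields terms with a favorable sign that can be absorbed into the interior main term after a symmetric decomposition, producing again at most a $C\rho^{-\sigma}\int_{B_{r+\rho}}w^{1-q}$ contribution.

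It remains to control the $\eta\int w\phi$ term: since $\phi\in\{\tau^{q+1}/w^q,\,\tau^2/w^q\}$, this is at most $\eta\int_{B_{r+\rho}}w^{1-q}$. Collecting everything and moving the errors to the right-hand side yields
\[
\iint_{B_r\times B_r} \frac{\bigl(w(x)^{(1-q)/2}-w(y)^{(1-q)/2}\bigr)^2}{|x-y|^{N+\sigma}}\,dx\,dy \leq C(q)(1+\eta)\rho^{-\sigma}\int_{B_{r+\rho}} w^{1-q},
\]
and the quadratic growth $C(q)\leq cq^2$ for large $q$ is inherited from the factor $\max\{1,(6q-5)/2\}$ in \eqref{tauprimera} (for $q>1$, which is the relevant regime for the Moser iteration). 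Importantly, all constants depend on $\delta$ only through the \emph{admissibility} of $\phi$ as a test function, not through any estimate, so letting $\delta\to 0$ would preserve the bound (although here $\delta>0$ is assumed from the start).

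The main obstacle I anticipate is the honest treatment of the crossed contributions, where one variable lies outside $B_{r+\rho}$: since the form is genuinely nonlocal, a naive estimate could bring in the tail of $w$ over all of $\RR^N$, which is not controlled by $\int_{B_{r+\rho}}w^{1-q}$. Avoiding this requires using only sign information (the nonnegativity of $w$ and $\phi$, and the vanishing of $\tau$ outside $B_{r+\rho}$) for those terms, rather than any pointwise bound on $w$ at infinity. A secondary delicate point is tracking the constants closely enough through the algebraic inequality to pin down the $q^2$ growth, which is precisely what makes the forthcoming Moser iteration converge.
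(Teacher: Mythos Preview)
Your proposal is correct and follows essentially the same route as the paper: the same cutoff $\tau$, the same test functions $\tau^{q+1}w^{-q}$ (for $q>1$) and $\tau^2 w^{-q}$ (for $0<q<1$), the same use of the algebraic inequalities \eqref{tauprimera}--\eqref{tausegunda}, and the same treatment of the extra $\eta$-term via $\eta\int w\phi \leq \eta\int_{B_{r+\rho}} w^{1-q} \leq \eta\rho^{-\sigma}\int_{B_{r+\rho}} w^{1-q}$. The paper's proof is in fact terser than yours: it simply defers the core Caccioppoli computation (including the cross-region contributions you flag) to \cite{Kassmann2009}, and only spells out the new $\eta$-term; your outline of how the tails are handled via the sign of $w$ and the vanishing of $\tau$ outside $B_{r+\rho}$ is exactly how that reference proceeds, and your reading of the $q^2$ growth from the constant $(q-1)\max\{1,(6q-5)/2\}$ is the right one.
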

\begin{proof}
  The main computations follow again from \cite{Kassmann2009}. Choose $\tau \in \cont^\infty_c(B_{r+\rho})$ such that $\tau \equiv 1$ on $\overline{B_r}$ and $\nabla \tau(x)\leq c\rho^{-1}$. Then, we use $\tau^{q+1} u^{-q}$ as a test function and inequality \eqref{tauprimera} if $q>1$; or $\tau^2u^{-q}$ and inequality~\eqref{tausegunda} if $0<q<1$. The main difference is that we have added to the right hand side the term
  \[ \eta \int_{\mathbb{R}^N} w(x) \tau(x)^{q+1}w(x)^{-q} \,dx  \leq \eta\int_{B_{r+\rho}} w(x)^{1-q} \,dx \leq \eta\rho^{-\sigma} \int_{B_{r+\rho}} w(x)^{1-q} \,dx. \]
  These inequalities remain the same if we use $\tau^2$ instead of $\tau^{q+1}$. Here, adding $\rho^{-\sigma}$ may seem artificial, but it helps to simplify the expression. We obtain \eqref{antiHLS}.
\end{proof}


Next we need the following auxiliary lemma, that will substitute the standard Hardy-Littlewood-Sobolev inequality when $\sigma \geq N$.

\begin{Lemma}[A General Nash-Gagliardo-Nirenberg type inequality] Let $\sigma \in (0,2)$, $B_R=B_R(x_0)$ and $u \in H^{\sigma/2}(B_R)$. Choose $\theta \in (0,1)$ such that $\theta \sigma <N$. Then, there exists a constant $c>0$ such that for any $R>0$ and any $u\in H^{\sigma/2}(B_R)$
  \begin{equation}  \label{HLS2theta}
    \begin{aligned}
      \left(\int_{B_R} |u(x)|^{\frac{2N}{N-\theta\sigma}} \, dx\right)^{\frac{N-\theta\sigma}{N}}  \leq \; & c \Bigg( R^{-\theta\sigma} \int_{B_R} |u(x)|^2\,dx                                                               \\
                                                                                                           & + \theta R^{\sigma(1-\theta)}\iint\limits_{B_R\times B_R} \frac{(u(x)-u(y))^2}{|x-y|^{N+2\sigma}}\, dxdy \Bigg).
    \end{aligned}
  \end{equation}
\end{Lemma}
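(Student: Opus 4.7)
The plan is to reduce the stated inequality to the scale-invariant multiplicative version
\[
\|u\|_{L^p(B_R)}^2 \leq C\,\|u\|_{L^2(B_R)}^{2(1-\theta)}\,[u]_{H^{\sigma/2}(B_R)}^{2\theta}, \qquad p = \tfrac{2N}{N-\theta\sigma},
\]
and then to split the product by a weighted Young's inequality so that the correct powers of $R$ fall on each term. The hypothesis $\theta\sigma<N$ is precisely what makes the intermediate fractional Sobolev embedding below valid.

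\noindent\textbf{Step 1 (multiplicative form).} On the unit ball I would extend $u$ to $\tilde u\in H^{\sigma/2}(\RR^N)$ by a standard Lipschitz-domain extension operator with dimensional constants, and chain two classical facts: the Sobolev embedding $\dot H^{\theta\sigma/2}(\RR^N)\hookrightarrow L^p(\RR^N)$, valid because $\theta\sigma<N$, and the interpolation
\[
[\tilde u]_{\dot H^{\theta\sigma/2}} \leq C\,\|\tilde u\|_{L^2}^{1-\theta}\,[\tilde u]_{\dot H^{\sigma/2}}^{\theta},
\]
which follows from Plancherel plus Hölder on the Fourier side applied to $|\xi|^{\theta\sigma/2}=(|\xi|^{\sigma/2})^{\theta}\cdot 1^{1-\theta}$. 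Composing the three ingredients produces the multiplicative inequality on $B_1$; the substitution $v(y)=u(x_0+Ry)$ then transports it to $B_R(x_0)$ with the same constant, since a direct check shows that all three terms carry the common scaling factor $R^{N-\theta\sigma}$.

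\noindent\textbf{Step 2 (splitting via Young).} With $a=\|u\|_{L^2(B_R)}^2$ and $b=[u]_{H^{\sigma/2}(B_R)}^2$, apply the weighted Young's inequality
\[
a^{1-\theta}\,b^{\theta}\leq (1-\theta)\,\lambda\,a+\theta\,\lambda^{-(1-\theta)/\theta}\,b, \qquad \lambda>0,
\]
with the choice $\lambda=R^{-\theta\sigma}$. This produces exactly the coefficients $R^{-\theta\sigma}$ in front of the $L^2$ term and $\theta\,R^{\sigma(1-\theta)}$ in front of the seminorm term, matching the statement (after absorbing the harmless factor $1-\theta\leq 1$ into $c$). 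The factor $\theta$ is kept visible because it is exactly what gets exploited in the preceding Moser iteration, where an accumulated product of such factors has to be controlled.

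\noindent\textbf{Main obstacle.} The only delicate point is making sure that every constant is independent of $R$. This is automatic from the scale-invariance of the three ingredients in Step 1 and from the pure homogeneity of Young's inequality in Step 2; any residual $\theta$-dependence in the interpolation constant is harmless because the subsequent application will use $\theta$ bounded away from $1$.
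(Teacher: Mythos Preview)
Your overall strategy---interpolation between $L^2$ and $H^{\sigma/2}$, Sobolev embedding at level $\theta\sigma/2$, Young's inequality, and scaling---is exactly the route the paper takes. The paper simply quotes the interpolation $\|u\|_{H^{\theta\sigma/2}(B_1)}\le C\|u\|_{L^2(B_1)}^{1-\theta}\|u\|_{H^{\sigma/2}(B_1)}^{\theta}$ from Brezis--Mironescu on the unit ball, combines it with the Sobolev embedding, and then says ``scaling argument and Young's inequality''. So conceptually you are aligned.

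There is, however, a genuine gap in your Step~1: the multiplicative inequality
\[
\|u\|_{L^p(B_R)}^2 \le C\,\|u\|_{L^2(B_R)}^{2(1-\theta)}\,[u]_{H^{\sigma/2}(B_R)}^{2\theta}
\]
with the \emph{seminorm} on the right is false---take $u\equiv 1$. Your own chain of inequalities does not produce it either: after extending to $\tilde u$ and interpolating on $\RR^N$ you obtain a bound by $\|\tilde u\|_{L^2(\RR^N)}^{1-\theta}[\tilde u]_{\dot H^{\sigma/2}(\RR^N)}^{\theta}$, but an extension operator controls $[\tilde u]_{\dot H^{\sigma/2}(\RR^N)}$ only by the \emph{full} norm $\|u\|_{H^{\sigma/2}(B_1)}$, never by the seminorm alone (again, $u\equiv 1$). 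Consequently the ``scale-invariant multiplicative form'' you want to transport to $B_R$ is not available, and the weighted Young step in Step~2, which is calibrated to that form, cannot be executed as written.

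The fix is minor and brings you back to the paper's argument: work with the full norm on $B_1$,
\[
\|u\|_{L^p(B_1)}^2 \le C\,\|u\|_{L^2(B_1)}^{2(1-\theta)}\,\big(\|u\|_{L^2(B_1)}^2+[u]_{H^{\sigma/2}(B_1)}^2\big)^{\theta},
\]
apply the \emph{unweighted} Young inequality $a^{1-\theta}b^{\theta}\le(1-\theta)a+\theta b$ there, obtaining $\|u\|_{L^p(B_1)}^2\le C\big(\|u\|_{L^2(B_1)}^2+\theta[u]_{H^{\sigma/2}(B_1)}^2\big)$, and only then rescale. The scaling introduces the factors $R^{-\theta\sigma}$ and $R^{\sigma(1-\theta)}$ automatically. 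In short: Young first, scaling afterwards; the order you proposed would require a multiplicative estimate that does not hold.
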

\begin{proof}
  From \cite{BrezisMironescu2018} we get the interpolation inequality
  \[\|u\|_{H^{\theta\sigma/2}(B_1)} \leq C \|u\|_{L^2(B_1)}^{1-\theta} \|u\|_{H^{\sigma/2}(B_1)}^{\theta}. \]
  If $\theta \sigma<N$, we get from the Sobolev embedding
  \[\left(\int_{B_1} |u(x)|^{\frac{2N}{N-\theta\sigma}} \, dx\right)^{\frac{N-\theta\sigma}{2N}} \leq C \|u\|_{L^2(B_1)}^{1-\theta}\|u\|_{H^{\sigma/2}(B_1)}^\theta. \]
  We obtain \eqref{HLS2theta} through a scaling argument and Young's inequality.
\end{proof}

Now we prove the result that will let us reach the infimum.

\begin{Lemma}\label{iterationlemma}
  Let $w$ be a nonnegative bounded weak supersolution of \eqref{elliptic} such that $w(x) \geq \delta$ in $B_3$ for some $\delta>0$. Then, for $\overline{p} \in (0,1)$ and $p \in [1, \frac{N}{(N-\sigma)_+})$,
  \begin{align}
    \essinf_{x \in B_1} w(x)                                          & \geq C_\eta \left(\int_{B_2} w(x)^{-\overline{p}} \,dx\right)^{-1/\overline{p}}, \label{negativelimit}     \\
    \left(\int_{B_2} w(x)^{\overline{p}} \,dx\right)^{1/\overline{p}} & \geq \widetilde{C}_{\eta, \overline{p}} \left(\int_{B_1} w(x)^{p} \,dx\right)^{1/p}, \label{positivelimit}
  \end{align}
  for constants $C_\eta>0$ depending on $\eta$ and  $\widetilde{C}_{\eta, \overline{p}}>0$ depending on $\eta$ and $\overline{p}$.
\end{Lemma}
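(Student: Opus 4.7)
The plan is to run Moser iteration twice, once with large negative exponents to capture the essential infimum, and once with small positive exponents to push the integrability up to any $p < N/(N-\sigma)_+$. The two tools are already in place: the anti-Sobolev estimate \eqref{antiHLS} (which gives energy control of $v := w^{(1-q)/2}$) and the Nash–Gagliardo–Nirenberg inequality \eqref{HLS2theta} (which turns an energy bound into an $L^{2\chi}$ bound with $\chi = N/(N-\theta\sigma) > 1$). The role of $\theta \in (0,1)$ with $\theta\sigma < N$ is precisely to accommodate the possibly low-dimensional case $N \leq \sigma$.

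For a fixed $q \neq 1$, put $v := w^{(1-q)/2}$ on concentric balls $B_r \subset B_{r+\rho} \subset B_3$. Combining \eqref{antiHLS} and \eqref{HLS2theta}, one obtains a one-step gain of the form
\begin{equation*}
  \Big(\int_{B_r} w^{(1-q)\chi}\Big)^{1/\chi} \leq C(q)\,(1+\eta)\,\rho^{-\sigma}\int_{B_{r+\rho}} w^{1-q},
\end{equation*}
after absorbing the $r$-factors using $r \leq 2$. This is the standard Moser reverse-Hölder inequality, with the extra $(1+\eta)$ coming from the elliptic zero-order term.

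To prove \eqref{negativelimit}, I fix $q_0 > 1$ with $q_0 - 1 = \overline{p}$, set $q_k - 1 = \overline{p}\,\chi^k$, and choose radii $r_k = 1 + 2^{-k}$, $\rho_k = r_k - r_{k+1} = 2^{-k-1}$. Raising the one-step gain to the power $1/(q_k - 1)$ gives
\begin{equation*}
  \|w^{-1}\|_{L^{(q_k-1)\chi}(B_{r_{k+1}})} \leq \bigl(C(q_k)(1+\eta) 2^{(k+1)\sigma}\bigr)^{1/(q_k-1)} \|w^{-1}\|_{L^{q_k-1}(B_{r_k})}.
\end{equation*}
Since $C(q) \leq cq^2$ for large $q$ by the statement after \eqref{antiHLS}, the exponent factors form a convergent product $\prod_k(\cdots)^{\chi^{-k}/\overline{p}}$, and iteration yields $\|w^{-1}\|_{L^\infty(B_1)} \leq C_\eta \|w^{-1}\|_{L^{\overline{p}}(B_2)}$, which is \eqref{negativelimit} after taking reciprocals. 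The lower bound $w \geq \delta$ in $B_3$ is only used to guarantee that every negative power of $w$ is integrable along the iteration; crucially, the final constant $C_\eta$ does not depend on $\delta$.

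To prove \eqref{positivelimit}, I reverse the direction, starting from $q_0 \in (0,1)$ with $1 - q_0 = \overline{p}$. Here \eqref{tausegunda} is the relevant algebraic inequality; the one-step gain now reads
\begin{equation*}
  \|w\|_{L^{(1-q)\chi}(B_r)} \leq \bigl(C(q)(1+\eta)\rho^{-\sigma}\bigr)^{1/(1-q)} \|w\|_{L^{1-q}(B_{r+\rho})}.
\end{equation*}
Because the sequence of exponents $(1-q_k)\chi^k \overline{p}$ would grow without bound while only $q < 1$ is allowed, I stop the iteration after finitely many steps at some $1-q_K \in [p, N/(N-\sigma)_+)$ and absorb the finite product of constants into $\widetilde{C}_{\eta,\overline{p}}$; adjusting the nested radii to start at $B_2$ and end at $B_1$ produces \eqref{positivelimit}.

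The main obstacle is bookkeeping: one must verify that the geometric series $\sum \chi^{-k}\log C(q_k)$ converges (this is exactly what $C(q) \leq cq^2$ buys us for \eqref{negativelimit}), and that the factor $(1+\eta)^{1/(q_k-1)}$ telescopes into a single $\eta$-dependent constant rather than blowing up. The second iteration is easier because it is finite, but one still has to confirm that $p < N/(N-\sigma)_+$ is reached; this is why the exponent gap $\chi > 1$ is indispensable and why the Nash–Gagliardo–Nirenberg inequality \eqref{HLS2theta}, rather than a plain Sobolev embedding, must be used when $\sigma \geq N$.
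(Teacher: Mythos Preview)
Your proposal is correct and follows essentially the same route as the paper: combine \eqref{antiHLS} with \eqref{HLS2theta} (after adding $\int_{B_r} w^{1-q}$ to both sides) to obtain the one-step reverse-H\"older gain with $\chi = N/(N-\theta\sigma)$, then run Moser iteration to $-\infty$ with radii $r_k = 1+2^{-k}$ for \eqref{negativelimit}, and a finite positive iteration from $\overline{p}$ up to some exponent in $[1,\chi)$ for \eqref{positivelimit}. The paper's bookkeeping is identical; the only cosmetic difference is that the paper writes the radii via increments $\rho_k = 2^{-k}$ and, for the positive part, takes $\rho = 1/\kappa$ at each of the finitely many steps rather than a dyadic sequence.
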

\begin{proof}
  Choose $\theta \in (0,1]$ such that $\theta\sigma < N$. Bear in mind that, if $\sigma \geq N$, we can choose $\theta$ so that $N-\theta\sigma$ is as close to zero as we want. Define $\chi = \frac{N}{N - \theta\sigma}$. We sum the integral
  \[\int_{B_r} w(x)^{1-q} \, dx \left( \leq \rho^{-\sigma}\int_{B_{r+\rho}} w(x)^{1-q} \, dx \right)\]
  on both sides of \eqref{antiHLS}. Then, we use \eqref{HLS2theta} to get
  \begin{equation} \label{baseiterations}
    \left(\int_{B_r} w(x)^{(1-q)\chi} \, dx \right)^{1/\chi} \leq C(q) (1+\eta)\rho^{-\sigma} \int_{B_{r+\rho}} w(x)^{1-q} \,dx,
  \end{equation}
  changing the constants if necessary and absorbing $c$ in $C(q)$ for simplicity. With this we have the main expression for the iterations ready. Now we have to distinguish two cases.

  \noindent \textsc{Negative iterations:} Define $p=q-1$. We focus on the range $q>1$, so $p>0$. If we raise \eqref{baseiterations} to $-p^{-1}$, and change the first ball to $B_{r-\rho}$ we get
  \[\left(\int_{B_{r-\rho}} w(x)^{-p\chi} \, dx \right)^{-1/p\chi} \geq \big(C(p) (1+\eta)\rho^{-\sigma} \big)^{-1/p} \left(\int_{B_{r}} w(x)^{-p} \,dx \right)^{-1/p}.  \]
  We need to change $\rho$ in each iteration, otherwise this will not converge. The iterations will start with $\overline{p}$ and $r=2
  $. Check that
  \[\begin{aligned}
      \left(\int_{B_{2-\rho_1-\rho_2}} w(x)^{-\overline{p}\chi^2} \, dx \right)^{-1/\overline{p}\chi^2} \geq \big(C(\overline{p}\chi) (1+\eta)\rho_2^{-\sigma} \big)^{-1/\overline{p}\chi} \left(\int_{B_{2-\rho_1}} w(x)^{-\overline{p}\chi} \,dx \right)^{-1/\overline{p}\chi} &   \\
      \geq \big[C(\overline{p})C(\overline{p}\chi)^{1/\chi}\big]^{-1/\overline{p}} \big[(1+\eta)^{1+1/\chi} \big]^{-1/\overline{p}} \big[\rho_1\rho_2^{1/\chi} \big]^{\sigma/\overline{p}} \left(\int_{B_2} w(x)^{-\overline{p}} \,dx \right)^{-1/\overline{p}}                  & .
    \end{aligned}\]
  Choose $\rho_k = 2^{-k}$. Since $\chi>1$, $p_k = \overline{p}\chi^k \rightarrow \infty$. Also notice that
  \[\sum_{k=0}^{\infty} \frac{1}{\chi^k} = \frac{1}{1-\frac{1}{\chi}} = \frac{N}{\theta\sigma}.\]
  Taking limits in the previous sequence of integrals, we get
  \[\inf_{B_1} w \geq \left(C(1+\eta)^{N/\theta \sigma} \int_{B_2} w(x)^
    {-\overline{p}} \,dx \right)^{-1/\overline{p}}, \]
  thus obtaining expression \eqref{negativelimit}.

  \noindent \textsc{Positive iterations:} Put now $p=1-q$ and consider the range $0<q<1$, so $0<p<1$. Raising both sides of \eqref{baseiterations} to $1/p$ and rearranging the expression we obtain
  \[  \left(\int_{B_{r}} w(x)^{p} \,dx \right)^{1/p}\geq \big(C(p) (1+\eta)\rho^{-\sigma} \big)^{-1/p}  \left(\int_{B_{r-\rho}} w(x)^{p\chi} \, dx \right)^{1/p\chi}. \]
  Like in the last case, we start with $\overline{p}$ and $r=2$. The main difference this time is that we can only perform a finite amount of iterations. The reason is that $p_k = \overline{p}\chi^k$ grows with $k$, so the last possible step is the one that joins $\overline{p}_{\kappa-1}<1$ to $1<\overline{p}_{\kappa}< \chi$. Thus, we can choose $\rho=1/\kappa$ for every step. This time we need
  \[\sum_{k=0}^{\kappa-1} \frac{1}{\chi^k} = \frac{1-\frac{1}{\chi^{\kappa}}}{1-\frac{1}{\chi}} = \frac{N}{\theta\sigma}\left(1- \frac{\overline{p}}{p}\right) \]
  for $p = \overline{p}\chi^{\kappa}$. Hence, the result after a finite number of steps is
  \begin{equation+} \tag*{ }
    \left(\int_{B_2} w(x)^{\overline{p}} \,dx\right)^{1/\overline{p}} \geq \left[C (1+\eta)^{\frac{N}{\theta\sigma}\left(1- \frac{\overline{p}}{p}\right) } \kappa^{\frac{N}{\theta}\left(1- \frac{\overline{p}}{p}\right)} \right]^{-1/\overline{p}} \left(\int_{B_1} w(x)^{p} \,dx\right)^{1/p}. \qedhere
  \end{equation+}
\end{proof}

Let us point out that, in \eqref{positivelimit}, making the iteration end in $B_1$ is completely arbitrary. We could have the inequality for any $B_r$ for $r<2$. Now we can join all the results and get the whole inequality for a supersolution.

\begin{Theorem} \label{superpositivity}
  Let $w$ be a bounded weak supersolution of \eqref{elliptic}. Then, for any $p \in [1, \frac{N}{(N-\sigma)_+})$,
  \begin{equation}\label{superpositivity_equation}
    \inf_{B_R} w \geq C_{\eta,R} \left(\int_{B_R} w(x)^{p} \,dx\right)^{1/p},
  \end{equation}
  where $C_{\eta, R} \sim CR^{-cR^{\sigma/2}}$ for $C>0$, $c>0$ and $R\gg 1$.
\end{Theorem}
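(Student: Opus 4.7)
The strategy is to chain the three preceding lemmas to obtain the inequality on the unit scale, and then transfer it to a general ball $B_R$ by scaling. First I would treat the case $R=1$ assuming also that $w\ge\delta>0$ on $B_3$. Combining Lemma \ref{iterationlemma} (negative iterations), Lemma \ref{loglemma}, and Lemma \ref{iterationlemma} (positive iterations) in sequence yields
\[
\essinf_{B_1} w \;\ge\; C_\eta \Big(\textstyle\int_{B_2} w^{-\overline p}\Big)^{-1/\overline p} \;\ge\; C_\eta\, C^{-1/\overline p} \Big(\textstyle\int_{B_2} w^{\overline p}\Big)^{1/\overline p} \;\ge\; C_\eta\, C^{-1/\overline p}\, \widetilde C_{\eta,\overline p}\Big(\textstyle\int_{B_1} w^{p}\Big)^{1/p},
\]
for any $p\in [1,\tfrac{N}{(N-\sigma)_+})$, which is the desired inequality on $B_1$ with an explicit constant depending on $\eta$ and $\overline p$. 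The hypothesis $w\ge\delta$ can then be removed by applying the above to $w+\delta$ (which is still a weak supersolution since $\mathcal{L}$ annihilates constants and $\eta(w+\delta)+\mathcal{L}(w+\delta)\ge \eta w+\mathcal{L}w\ge 0$) and then sending $\delta\to 0^+$ by monotone/dominated convergence; the lemmas' constants do not involve $\delta$.

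Next I would perform a scaling argument to pass from $B_1$ to $B_R$. Given $x_0\in\RR^N$ and $R>0$, set $\tilde w(y):=w(x_0+Ry)$. A direct change of variables in \eqref{operator_def} shows that $\mathcal{L}w(x_0+Ry)=R^{-\sigma}\mathcal{L}_0 \tilde w(y)$, where $\mathcal{L}_0$ is a stable-like operator whose kernel satisfies \eqref{nucleo} with the same constants $\Lambda$ and $\sigma$ as $J$. Consequently $\tilde w$ is a nonnegative bounded weak supersolution on $\RR^N$ of
\[
\tilde\eta\,\tilde w + \mathcal{L}_0 \tilde w \;\ge\; 0,\qquad \tilde\eta = R^\sigma \eta.
\]
Applying the $R=1$ estimate to $\tilde w$ with parameter $\tilde\eta$, and undoing the change of variables $x=x_0+Ry$ (which produces a factor $R^{-N/p}$ in the $L^p$ integral and does not change the infimum), gives
\[
\essinf_{B_R(x_0)} w \;\ge\; C'_{R^\sigma\eta}\, R^{-N/p} \Big(\textstyle\int_{B_R(x_0)} w^{p}\,dx\Big)^{1/p}.
\]
A minor notational point: the lemmas above are phrased on the pair $(B_1,B_2)$. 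If one prefers an honest $(B_R,B_R)$ statement rather than $(B_{R/2},B_R)$, it is enough to bound the integral over the smaller ball by the integral over the larger one, renaming the radius; this loses only a harmless constant.

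Finally, tracking the constants carefully gives the stated asymptotic $C_{\eta,R}\sim C R^{-cR^{\sigma/2}}$ for $R\gg 1$. Indeed, from the proof of Lemma \ref{loglemma} one reads $\overline p = c(1+\tilde\eta)^{-1/2}\sim R^{-\sigma/2}$, while the constants $C_{\tilde\eta}$ from Lemma \ref{iterationlemma} and the BMO constant $C^{-1/\overline p}$ from Lemma \ref{loglemma} contain factors of the form $(1+\tilde\eta)^{a/\overline p}$; with $\tilde\eta=R^\sigma\eta$ and $1/\overline p\sim R^{\sigma/2}$, these produce $R^{cR^{\sigma/2}}$ in the denominator of the lower bound, i.e. $C_{\eta,R}\sim R^{-cR^{\sigma/2}}$.

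The routine part is the chaining of inequalities; the only delicate step is the bookkeeping of the constants through the scaling, since $\overline p$ itself depends on $R$ through $\tilde\eta$, and therefore so does the number of positive Moser iterations $\kappa$ needed to reach an exponent $p\ge 1$. I expect this explicit exponential-type dependence on $R$ to be the main technical obstacle; nevertheless, once all the factors $(1+R^\sigma\eta)^{1/\overline p}$ and $\kappa^{N/\theta(1-\overline p/p)}$ are combined, the announced behaviour follows by an elementary comparison of powers.
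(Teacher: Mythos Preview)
Your proposal is correct and follows essentially the same approach as the paper: chain the three lemmas at unit scale, remove the auxiliary $\delta$ (the paper simply applies the lemmas to $w$ assuming $w\ge\delta$ on $B_3$ and lets $\delta\to0$, while your $w+\delta$ trick is an equivalent implementation), then rescale $\tilde w(y)=w(x_0+Ry)$ so that $\eta$ becomes $R^\sigma\eta$, and finally track the constants using $\overline p\sim R^{-\sigma/2}$ and $\kappa\sim\log R$ to obtain the $R^{-cR^{\sigma/2}}$ asymptotics. One minor remark: the chain already lands on the pair $(B_1,B_1)$ (infimum on $B_1$, integral on $B_1$) since the positive iterations in Lemma~\ref{iterationlemma} go from $B_2$ to $B_1$, so after scaling you get $(B_R,B_R)$ directly without the covering step you flagged.
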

\begin{proof}
  We assume $w \geq \delta$ on $B_3$, and take $\delta\to 0$ afterwards. Let us put together lemmas \ref{loglemma} and \ref{iterationlemma} to obtain
  \[\begin{aligned}
      \inf_{B_1} w & \geq \left[C(1+\eta)^{N/\theta \sigma} \right]^{-1/\overline{p}} \left(\int_{B_2} w(x)^
      {-\overline{p}} \,dx \right)^{-1/\overline{p}}                                                                                                                                                                                                                                                         \\
                   & \geq c^{-1/\overline{p}} \left[C(1+\eta)^{N/\theta \sigma} \right]^{-1/\overline{p}} \left(\int_{B_2} w(x)^
      {\overline{p}} \,dx \right)^{1/\overline{p}}                                                                                                                                                                                                                                                           \\
                   & \geq \left[C(1+\eta)^{N/\theta \sigma} \right]^{-1/\overline{p}} \left[C (1+\eta)^{\frac{N}{\theta\sigma}\left(1- \frac{\overline{p}}{p}\right) } \kappa^{\frac{N}{\theta}\left(1- \frac{\overline{p}}{p}\right)} \right]^{-1/\overline{p}} \left(\int_{B_1} w(x)^{p} \,dx\right)^{1/p} \\
                   & = C^{-1/\overline{p}} (1+\eta)^{\frac{N}{\theta \sigma}\left(\frac{1}{p} - \frac{2}{\overline{p}}\right)} \kappa^{\frac{N}{\theta}\left(\frac{1}{p} - \frac{2}{\overline{p}}\right)}  \left(\int_{B_1} w(x)^{p} \,dx\right)^{1/p}.
    \end{aligned}\]
  This is true for any $p \in [1, \frac{N}{(N-\sigma)_+})$ because we first fix $p$, and then choose $\overline{p} = p\chi^{-\kappa}$ for a natural number~$\kappa$ big enough such that \Cref{loglemma} can be applied.

  Since we want these estimates on balls of radius $R$, we need to perform a change of scale. In the original equation we define $\widetilde{x}=x/R$, so that we change the ball $B_R$ by the ball $B_1$. The equation \eqref{elliptic} is not invariant under this change: the constant $\eta$ must change to $\eta_R = R^\sigma \eta$. Then, all the computations we performed work and, if we undo the change of variables at the end, we get
  \[\inf_{B_R} w \geq C^{-1/\overline{p}} (1+R^\sigma\eta)^{\frac{N}{\theta \sigma}\left(\frac{1}{p} - \frac{2}{\overline{p}}\right)} \kappa^{\frac{N}{\theta}\left(\frac{1}{p} - \frac{2}{\overline{p}}\right)}  R^{-N/p}\left(\int_{B_R} w(x)^{p} \,dx\right)^{1/p}, \]
  where $\overline{p} = c/\sqrt{1 + R^\sigma \eta}$. We are particularly concerned with big enough $R$. In this case, we can consider $\overline{p} \sim R^{-\sigma/2}$, and from the formula $\overline{p}\chi^{\kappa} = p$, we get that
  $\kappa \sim \log R$. Introducing all this in the previous expression and studying the limit for large $R$, we can simplify it and obtain
  \begin{equation+} \tag*{ }
    \inf_{B_R} w \geq C R^{-c R^{\sigma/2} }\left(\int_{B_R} w(x)^{p} \,dx\right)^{1/p}. \qedhere 
    \end{equation+}
\end{proof}

Putting toghether the previous results, we easily get the proof of our positivity result.

\begin{proof}[Proof of \Cref{positivitytheorem}]
  It follows from \Cref{parabolictoelliptic} and \Cref{superpositivity}, with $w = \varphi(u(t))$.
\end{proof}

Notice that we have obtained positivity for a.e.\@ $t>0$, but using \eqref{previouspositivity} we can actually get positivity for every $t>0$.

\begin{Corollary}
  Under hypotheses of \Cref{positivitytheorem}, if moreover $c s^m \leq \varphi(s) \leq Cs^m$ with $m_c<m<1$, then for every $t>0$
  \begin{equation} \label{aespositivity}
    \essinf_{x \in B_R(x_0)} u(x,t) \geq C_{\varpi,t, R} \int_{B_R(x_0)} u(x,t) \,dx.
  \end{equation}
\end{Corollary}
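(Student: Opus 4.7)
The plan is to apply \Cref{positivitytheorem} to $\varphi(u(t))$ with the specific exponent $p=1/m$, transfer the resulting bound back to $u(t)$ via the power bounds $cs^m\le \varphi(s)\le Cs^m$, and then use \eqref{previouspositivity} to upgrade the resulting inequality from a.e.\@ $t>0$ to every $t>0$.

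First I would verify that $p=1/m$ is admissible: $m<1$ gives $p\geq 1$, and $m>m_c=(N-\sigma)_+/N$ gives $p<N/(N-\sigma)_+$. \Cref{positivitytheorem} then yields, for a.e.\@ $t>0$,
\[\essinf_{x\in B_{R/2}(x_0)}\varphi(u(x,t)) \;\geq\; c\Bigl(\int_{B_R(x_0)}\varphi(u(y,t))^{1/m}\,dy\Bigr)^{m}.\]
Since $\varphi$ is strictly increasing and continuous, the left-hand side equals $\varphi(\essinf_{B_{R/2}}u(t))$, which by $\varphi(s)\leq Cs^m$ is at most $C(\essinf_{B_{R/2}}u(t))^m$. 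The reverse bound $\varphi(s)\geq cs^m$ turns $\varphi(u)^{1/m}$ into a pointwise multiple of $u$, so the right-hand side dominates $c'\int_{B_R(x_0)}u(y,t)\,dy$. Taking $m$-th roots and, if desired, renaming $R$, this is exactly \eqref{aespositivity} for a.e.\@ $t>0$.

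To promote this to every $t>0$, I would fix an arbitrary $t>0$ and choose a sequence $t_n\downarrow t$ along which the a.e.\@ version just obtained is valid (possible since the bad set has measure zero). Inequality \eqref{previouspositivity}, combined with the same power bounds on $\varphi$, gives
\[u(x,t) \;\geq\; (c/C)^{1/m}(t/t_n)^{1/(Am)}\,u(x,t_n)\qquad\text{for a.e.\@ }x.\]
Taking the essential infimum over $B_R(x_0)$ and invoking the a.e.\@ estimate at $t_n$ on the right, I would then pass to the limit $n\to\infty$: the prefactor $(t/t_n)^{1/(Am)}$ tends to $1$, the constant from \Cref{superpositivity} depends on $t_n$ continuously through $\eta_n=(A\varpi t_n)^{-1}$, and the continuity $u\in\cont([0,\infty):L^1(\RR^N))$ delivers $\int_{B_R(x_0)}u(t_n)\to\int_{B_R(x_0)}u(t)$. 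The only point requiring care is the continuous dependence of the Harnack constant on $\eta$, which is transparent from the proof of \Cref{superpositivity}, so no serious obstacle is expected.
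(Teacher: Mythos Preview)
Your proposal is correct and follows exactly the paper's approach: choose $p=1/m$ in \eqref{positivityinequality}, use the power bounds $cs^m\leq\varphi(s)\leq Cs^m$ to pass from $\varphi(u)$ to $u$, and then invoke \eqref{previouspositivity} together with $u\in\cont([0,\infty):L^1(\RR^N))$ to upgrade from a.e.\@ $t>0$ to every $t>0$. The paper's proof is just a two-line sketch of precisely this argument, and your version supplies the details it omits.
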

\begin{proof} Just put $p = \frac{1}{m} \in [1, \frac{N}{(N-\sigma)_+})$ in \eqref{positivityinequality}. To see that it is true for every $t>0$, use \eqref{previouspositivity} and that $u \in \cont([0,\infty): L^1(\RR^N))$.
\end{proof}

The constant $C_{\varpi,t, R}$ depends on the time $t$, but we can avoid the explicit dependence if we work on compact subsets of $(0,\infty)$. For example, if $t \in (T^{-1}, T)$ for $T>1$, then we can see the constant as only depending on $T$. This will be important in the asymptotic behaviour result.

Only \eqref{positivityinequality} is required to prove positivity of the solution at $t$, since $\varphi$ is strictly increasing and $\varphi(0)=0$. In the case where $m\geq m_c$, our solutions conserve mass (see \Cref{masscons}), and thus if the initial condition is not trivially zero, we conclude positivity. If $m<m_c$, there is an extinction time for our solutions (see \Cref{extinction}), but if the initial condition is not trivially zero, we can conclude positivity for any time before the extinction time.

With \eqref{aespositivity} and the conservation of mass property, we can get a decay rate at infinity. Just take $R$ big enough.

\begin{Corollary}[Decay rate]
  Let $u$ be a bounded weak solution of \eqref{Cauchy} such that it satisfies \eqref{aespositivity} and $m\geq m_c$. Then, for $R \gg 1$, we have
  \begin{equation}\label{decayrate}
    \essinf_{x \in B_R(x_0)} u(x,t) \geq C R^{-cR^{\sigma/2}} M, \qquad M = \int_{\RR^N} u_0.
  \end{equation}
\end{Corollary}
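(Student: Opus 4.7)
The plan is to combine the estimate \eqref{aespositivity} that we already have with the conservation of mass property (available since $m\geq m_c$) and the explicit $R$-behaviour of the constant from \Cref{superpositivity}. Starting from
\[
\essinf_{x\in B_R(x_0)} u(x,t) \geq C_{\varpi,t,R}\int_{B_R(x_0)} u(x,t)\,dx,
\]
the strategy is simply to show that the integral on the right can be made as close to $M$ as we wish by taking $R$ large, while the prefactor $C_{\varpi,t,R}$ retains the same dominant decay rate that was derived in \Cref{superpositivity}.

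First I would exploit that $u(\cdot,t)\in L^1(\RR^N)$ with total mass $\int_{\RR^N}u(x,t)\,dx=M$, a consequence of \Cref{masscons}. Since $B_R(x_0)\uparrow \RR^N$ as $R\to\infty$, monotone (or dominated) convergence gives $\int_{B_R(x_0)}u(x,t)\,dx\to M$; therefore, for all $R$ large enough (depending on $u$, $t$, $x_0$) one has, for instance, $\int_{B_R(x_0)}u(x,t)\,dx\geq M/2$. This is the precise meaning of the parenthetical hint ``just take $R$ big enough''.

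Next I would track the prefactor. The constant $C_{\varpi,t,R}$ inherited from \Cref{positivitytheorem} originates from the elliptic Harnack constant $C_{\eta,R}$ of \Cref{superpositivity} with $\eta=(A\varpi t)^{-1}$, applied to $w=\varphi(u(t))$, and then transferred back to $u$ using $cs^m\leq\varphi(s)\leq Cs^m$. Once $t$ and the $L^\infty$-norm of $u$ (hence $\varpi$) are fixed, \Cref{superpositivity} gives the asymptotic $C_{\varpi,t,R}\sim C R^{-cR^{\sigma/2}}$ for $R\gg 1$, with $C,c>0$ absorbing the fixed data. Plugging the two estimates together and renaming $C/2\mapsto C$ yields exactly
\[
\essinf_{x\in B_R(x_0)} u(x,t)\geq C R^{-cR^{\sigma/2}} M.
\]

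There is no genuine obstacle here: the argument is a one-line combination of \eqref{aespositivity}, mass conservation, and the explicit rate from \Cref{superpositivity}. The only point requiring mild care is bookkeeping — checking that the dependence of $C_{\varpi,t,R}$ on $\varpi$ and $t$ is harmless once these are fixed, so that the dominant dependence in $R$ really is $R^{-cR^{\sigma/2}}$ and can be written without further qualifiers.
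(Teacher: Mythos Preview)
Your proposal is correct and matches the paper's own approach: the paper simply states that the decay rate follows from \eqref{aespositivity} together with conservation of mass by ``tak[ing] $R$ big enough,'' which is precisely the combination you carry out, supplemented by the explicit $R$-asymptotics of the constant from \Cref{superpositivity}.
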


This decay rate is far worse than the one expected, which is the one for the fractional Laplacian case, see~\cite{BonforteVazquezQuantitativeEstimates}.

\begin{Remark}
  Except for result \eqref{decayrate} and for the comments that require conservation of mass, we have not used the fact that $u(t) \in L^1(\RR^N)$. We expect the proof to work for a wider notion of weak solutions where $u(t)$ is not integrable but may satisfy some decay conditions, as long as $\varphi(u(t))$ is bounded and belongs to the fractional Sobolev space $\dot{H}^{\sigma/2}(\RR^N)$.
\end{Remark}

\section{Asymptotic behaviour}
\setcounter{equation}{0}

We study in this section the asymptotic behaviour of the solutions when the operator $\mathcal{L}$ behaves in some sense as $(-\Delta)^{\sigma/2}$, see condition \eqref{limitJ}, and the nonlinearity $\varphi$ behaves as a power in a neighbourhood of the origin, see condition \eqref{limitphi}. Given a weak solution $u$ of \eqref{Cauchy}, let us consider the sequence of functions
\begin{equation} \label{asymptoticscalings}
  u_k (x,t) = k^\alpha u(k^{\alpha/N} x, kt), \quad k\geq 1.
\end{equation}
We are going to prove that, after taking several subsequences, these scalings converge to $v$, a solution to the fractional fast diffusion equation
\begin{equation} \label{ecuacion_v}
  \partial_t v + \kappa (-\Delta)^{\sigma/2} v^m= 0. 
\end{equation}
Afterwards, we will check that $v$ takes $M\delta_0$ as an initial condition. Then, using a uniqueness result from~\cite{VazquezBarenblattnolocal}, we will identify the function $v$ as the Barenblatt solution $B_M$. Since this is true independently of the subsequence, we identify $v$ as the limit of the whole sequence $u_k$. Lastly, we will take $t=1$ and change notation so that $k=t$. One change of variables later, we conclude the result. In order to simplify notation, from now on we will consider $\kappa = 1$.

We can summarize the steps of the proof as follows: $i)$ we use the positivity result and Hölder continuity (see \cite{dePabloQuiros2018}) to get convergence on compact sets of a certain subsequence; $ii)$ uniform tail estimates give global convergence in $L^1$; $iii)$  we use weak* convergence in $L^\infty$ in combination with the previous compactness to prove that $v$ is a weak solution of \eqref{ecuacion_v}; and $iv)$ uniqueness of solutions with a Dirac delta as initial condition allows us to identify the limit, and shows that convergence is not restricted to subsequences.

For the proof of the asymptotic behaviour result, we need a control of the mass that our solutions have far away from the origin.

\begin{Proposition}[Tail control] \label{tailcontrol}
  Assume $\varphi(s) \leq C s^m$ and let $u$ be a weak solution of \eqref{Cauchy}. Then,
  \begin{equation}\label{tailestimate}
    \int_{\mathbb{R}^N\backslash B_{2R}} u(t) \leq \int_{\mathbb{R}^N\backslash B_{R}} u_0 + C \frac{t}{R^{N/\alpha}} \|u_0\|_1^m.
  \end{equation}
\end{Proposition}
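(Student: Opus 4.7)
The plan is to test the weak formulation against a smooth spatial cutoff supported in $B_{2R}$ that equals $1$ on $B_R$, and then to exploit pointwise decay of $\mathcal{L}$ applied to such a cutoff. Fix $\xi \in \cont^\infty_c(\RR^N)$ with $\xi \equiv 1$ on $B_1$, $\xi \equiv 0$ outside $B_2$ and $0 \leq \xi \leq 1$, and set $\chi_R(x) = \xi(x/R)$. Plugging $\zeta(x,s) = \chi_R(x)\eta_\varepsilon(s)$ into \eqref{weakuation}, with $\eta_\varepsilon$ a smooth approximation of $\mathds{1}_{[0,t]}$, and letting $\varepsilon\to 0$ (the continuity $u\in\cont([0,\infty):L^1(\RR^N))$ handles the temporal limit) yields
\[
\int_{\RR^N}(u(t)-u_0)\chi_R\,dx = -\int_0^t\int_{\RR^N}\varphi(u(s))\mathcal{L}\chi_R(x)\,dx\,ds,
\]
where I have used $\mathcal{E}(\varphi(u),\chi_R) = \langle\varphi(u),\mathcal{L}\chi_R\rangle$ since $\chi_R\in\cont^\infty_c(\RR^N)$. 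Combining with the $L^1$ contraction $\|u(t)\|_1\leq\|u_0\|_1$ and the trivial bounds $\mathds{1}_{B_R}\leq\chi_R\leq\mathds{1}_{B_{2R}}$ gives
\[
\int_{\RR^N\setminus B_{2R}}u(t)\,dx\leq\int_{\RR^N\setminus B_R}u_0\,dx + \int_0^t\int_{\RR^N}\varphi(u(s))|\mathcal{L}\chi_R|\,dx\,ds.
\]

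The remaining task is to show that the last double integral is bounded by $CtR^{-N/\alpha}\|u_0\|_1^m$ (recall $N/\alpha = N(m-1)+\sigma$). The key ingredient is a two-scale bound for $\mathcal{L}\chi_R$:
\[
|\mathcal{L}\chi_R(x)|\leq CR^{-\sigma}\text{ for every }x\in\RR^N,\qquad |\mathcal{L}\chi_R(x)|\leq \frac{CR^N}{|x|^{N+\sigma}}\text{ for }|x|\geq 4R.
\]
The uniform bound follows from the second-difference representation \eqref{second_differences}, $\|D^2\chi_R\|_\infty\leq CR^{-2}$, and the pointwise kernel bound in \eqref{nucleo}, after splitting the $z$-integral at $|z|=R$. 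The decay bound uses that for $|x|\geq 4R$ one has $\chi_R(x)=0$, so $\mathcal{L}\chi_R(x) = -\int_{B_{2R}}\chi_R(y)J(x,y)\,dy$, and on this domain $|x-y|\sim|x|$.

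Using $\varphi(u)\leq Cu^m$ together with Hölder's inequality in the form $\int_A u^m\leq |A|^{1-m}\|u(s)\|_{L^1(A)}^m\leq C|A|^{1-m}\|u_0\|_1^m$ (valid because $m<1$), the near region contributes
\[
\int_{B_{4R}}\varphi(u(s))|\mathcal{L}\chi_R|\,dx\leq CR^{-\sigma}\cdot R^{N(1-m)}\|u_0\|_1^m = CR^{-N/\alpha}\|u_0\|_1^m.
\]
For the far region I split $\RR^N\setminus B_{4R}$ into dyadic annuli $A_k = B_{2^{k+1}R}\setminus B_{2^k R}$, $k\geq 2$, and apply the decay estimate plus Hölder on each $A_k$; the bounds form a geometric series with ratio $2^{-(Nm+\sigma)}$, which is summable since $Nm+\sigma>0$, and whose sum again matches $CR^{-N/\alpha}\|u_0\|_1^m$. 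Integrating in $s\in(0,t)$ finishes the argument.

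I anticipate the principal technical obstacle to be the decay estimate for $\mathcal{L}\chi_R$ at $|x|\gg R$: because the operator is not of convolution type, one cannot invoke a clean scaling identity of the form $\mathcal{L}\chi_R=R^{-\sigma}(\mathcal{L}_R\xi)(\cdot/R)$, and the estimate must be produced directly from the pointwise upper bound on $J$ in \eqref{nucleo}. Once this is in place, the rest (the limiting argument in the temporal cutoff, the Hölder reduction, and the dyadic summation) is robust and essentially routine.
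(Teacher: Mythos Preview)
Your proof is correct, but the route you take for the key estimate on $\int_0^t\int\varphi(u)|\mathcal{L}\chi_R|$ is more laborious than the paper's. The paper observes that the scaling identity $\mathcal{L}\chi_R(x) = R^{-\sigma}\,\widetilde{\mathcal{L}}\xi(x/R)$ \emph{does} hold, where $\widetilde{\mathcal{L}}$ is the operator with kernel $\widetilde J(x,y)=R^{N+\sigma}J(Rx,Ry)$; the point is that $\widetilde J$ satisfies the very same two-sided bounds~\eqref{nucleo} as $J$, so the estimate $|\widetilde{\mathcal{L}}\xi(x)|\le C(1+|x|)^{-(N+\sigma)}$ holds with a constant independent of $R$. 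This immediately gives $\|\mathcal{L}\chi_R\|_q\le CR^{-\sigma+N/q}$ for every $q\in[1,\infty]$. A single global H\"older inequality with exponents $(1/m,1/(1-m))$, together with the identity $\|u^m\|_{1/m}=\|u\|_1^m$ and mass conservation, then yields $\int\varphi(u(s))|\mathcal{L}\chi_R|\le CR^{-N/\alpha}\|u_0\|_1^m$ in one line---no near/far split, no dyadic summation. So your anticipated obstacle (that a clean scaling identity fails for non-convolution kernels) is not actually an obstacle: the two-sided bound on $J$ absorbs the rescaling. Your pointwise two-scale bound plus dyadic argument recovers the same estimate by hand and is perfectly valid; it is just longer.
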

\begin{proof}
  We start with the identity \eqref{salto_tiempo},
  \[\int_{\mathbb{R}^N} u(t) \phi - \int_{\mathbb{R}^N} u_0 \phi = - \int_0^t \mathcal{E}(\varphi(u), \phi) \left(= - \int_0^t \int_{\mathbb{R}^N} \varphi(u) \mathcal{L}\phi\right). \]
  The continuity of $u$ in $L^1(\mathbb{R}^N)$ at $t=0$ is vital here. What we want to control is the mass of $u$ out of a ball centered at the origin at any given time, and we are going to achieve this by controlling the mass inside the ball (since the total mass is conserved). We take $\phi = \phi_R$ for $\phi_R(x) = \psi(|x|/R)$, where $\psi$ is smooth cut-off function such that $\psi(s) = 1$ for $0\leq s \leq1$,  $\psi(s) = 0$ for $s\geq 2$, and it decreases in the interval $(1,2)$.

  It is simple to check, as in \Cref{masscons}, that $\|\mathcal{L}\phi_R\|_q \leq C R^{-\sigma + N/q}$. If we take $p=1/m$, its conjugate is $q= 1/(1-m)$. With this in mind, using Hölder's inequality,
  \[\begin{aligned}
      \left| \int_{\mathbb{R}^N} \varphi(u(t)) \mathcal{L}\phi_R\right| & \leq \|\varphi(u(t))\|_{1/m}  \|\mathcal{L}\phi_R\|_{1/(1-m)}  \leq C \|u(t)^{m}\|_{1/m} R^{-N/\alpha} \\
                                                                        & = C \|u(t)\|_1^{m} R^{-N/\alpha} = C \|u_0\|_1^{m} R^{-N/\alpha}.
    \end{aligned}\]
  Notice that
  \[\int_{\mathbb{R}^N} u(t) \phi_R - \int_{\mathbb{R}^N} u_0 \phi_R \leq \int_{B_{2R}} u(t) - \int_{B_{R}} u_0  =  \int_{\RR^N \setminus B_{R}} u_0 - \int_{\RR^N \setminus B_{2R}} u(t).\]
  Putting these last two estimates together we obtain
  \begin{equation+} \tag*{ }
    \int_{\mathbb{R}^N\backslash B_{2R}} u(t) \leq \int_{\mathbb{R}^N\backslash B_{R}} u_0 + C \frac{t}{R^{N/\alpha}} \|u_0\|_1^m. \qedhere 
  \end{equation+}
\end{proof}

This estimate by itself is interesting, since it tells us that our solution behaves as one would expect the solution of a diffusion system to behave: if the initial condition is concentrated at the origin, for small $t$ and large $R$ the solution $u$ must be close to zero. This is true even though our process is nonlocal: in the way our operator works, it allows particles to jump anywhere in space, but the probability of jumping decreases with the distance.

With this, we can now address the intermediate asymptotics result.

\begin{proof}[Proof of \Cref{asymptoticbehaviour}]
  Since our solution is bounded, we can modify the nonlinearity $\varphi(s)$ for $s>\|u\|_\infty$ and make it linear. Consequently, we may assume that there exist constants $0 < c \leq C < \infty$ such that
  \[c s^{m-1} \leq \varphi'(s) \leq C s^{m-1} \quad \text{ for all } s > 0,  \]
  since this is true for $u \approx 0$. From this we deduce the following properties for the inverse,
  \[\widetilde{c} s^{\frac{1}{m}-1} \leq \beta'(s) \leq \widetilde{C} s^{\frac{1}{m}-1} \quad \text{ for all } s > 0.\]
  Let $u_k$ be defined as in \eqref{asymptoticscalings}. It is easy to check that this scaling preserves mass. The scaled function $u_k$ satisfies
  \[\partial_t u_k + \mathcal{L}_k \varphi_k (u_k) = 0, \quad u_k(x, 0) = k^{\alpha} u_0(k^{\alpha/N} x)\]
  in a weak sense, where $\varphi_k(s)= k^{m\alpha} \varphi(s/k^\alpha)$ and $\mathcal{L}_k$ has associated kernel $J_k(x,y) = k^{\alpha(N+\sigma)/N} J(k^{\alpha/N} x, k^{\alpha/N} y)$. The family of operators $\mathcal{L}_k$ satisfies the same restrictions that $\mathcal{L}$ does, because
  \begin{equation} \label{kkernels}
    \frac{ 1 }{\Lambda |x-y|^{N+\sigma}} \leq J_k(x,y) \leq \frac{\Lambda}{|x-y|^{N+\sigma}}.
  \end{equation}
  We also observe that for $\varphi_k$ and $\beta_k = \varphi_k^{-1}$,
  \[c s^{m-1} \leq \varphi_k'(s) \leq C s^{m-1}, \quad \widetilde{c} s^{\frac{1}{m}-1} \leq  \beta_k'(s) \leq \widetilde{C} s^{\frac{1}{m}-1} \quad \text{ for all } s > 0,\]
  so the nonlinearity has the same constraints, independently of $k$. The assumptions \eqref{limitJ} and \eqref{limitphi} on the limit behaviours of $\widetilde{J}$ and $\varphi$ give
  \begin{align}
     & \lim_{k\rightarrow \infty} J_k(x, x+z) = J_\infty(z) := c_1 |z|^{-(N+\sigma)} \quad &  & \text{ uniformly for } |z| \geq K >0 ,\,x \in \RR^N,  \label{limitJ2}             \\
     & \lim_{k\rightarrow \infty} \varphi_k(s) = \varphi_\infty(s) := c_2 s^m \quad                           &  & \text{ uniformly for } 0 \leq s \leq K < \infty,  \label{limitphi2}
  \end{align}
  for an arbitrary constant $K>0$. We have now prepared all the required tools for the proof.

  \noindent \textsc{Arzelà-Ascoli compactness.}
  The smoothing effect \eqref{smoothingeq} gives
  \[\|u_k(t)\|_\infty \leq C t^{-\alpha}\|u_0\|_{1}^{\gamma} \leq C(\nu), \quad t\geq \nu >0,\]
  independently of $k\geq 1$, showing that the scalings are equibounded given a positive time. This implies
  \[\varphi'_k(u_k(t))\geq cu_k(t)^{m-1} \geq c \|u_k(t)\|_\infty^{m-1} \geq C(\nu)^{m-1} = \varpi, \]
  where $\varpi$ is independent of $k$. This, together with \eqref{aespositivity} (and the fact that all $\varphi_k(u_k)$ verify \eqref{nonincreasing}) allows us to write
  \begin{equation}\label{uniformpositivity}
    \essinf_{x \in B_R(0)} u_k(x,t) \geq C_{\varpi,t, R} \int_{B_R(0)} u_k(x,t) \,dx.
  \end{equation}
  We can ignore the dependence on $t$ of the constant as long as we work on compact subsets of $(0, \infty)$. Using the conservation of mass and the tail control \eqref{tailestimate},
  \begin{equation}
    \begin{aligned}
      \int_{B_R} u_k(x,t)\, dx & = M - \int_{\RR^N \setminus B_R} u_k(x,t)\, dx                                                      \\
                               & \geq M - \int_{\mathbb{R}^N\backslash B_{R/2}} u_k(x,0)\, dx - C \frac{t}{R^{N/\alpha}} \|u_0\|_1^m \\
                               & = M - \int_{\mathbb{R}^N\backslash B_{k^{\alpha/N} R/2}} u_0 - C \frac{t}{R^{N/\alpha}} \|u_0\|_1^m \\
                               & \geq M - \int_{\mathbb{R}^N\backslash B_{R/2}} u_0 - C \frac{t}{R^{N/\alpha}} M^m \geq \frac{M}{2},
    \end{aligned}
  \end{equation}
  where the last inequality is true for bounded time $0\leq t \leq T$, and then taking $R$ big enough appropiately. The bound is uniform in $k\geq 1$. This, together with \eqref{uniformpositivity}, implies that $u_k\geq\delta>0$ in a ball for $t\in (T^{-1},T]$ for any $T>1$. Hence, we can redefine all $\beta_k$ to be linear for small values of $s$, so that $\beta'_k(s) \geq c >0$ for $s>0$. Now, with a slight modification of the regularity proof in \cite{dePabloQuiros2018}, following the linear method in \cite{CaffarelliChanVasseur} applied to each problem with the nondegenerate $\beta_k$, we conclude that $\varphi_k(u_k)$ are Hölder continuous with uniform constants and exponents. Thus, inside the cylinder $B_n \times [\frac{1}{n}, T]$, the sequence $\{\varphi_k(u_k)\}$ is both equibounded and equicontinuous. Then we can apply Arzelà-Ascoli's Theorem and get a subsequence that converges uniformly to some function~$\widetilde{v}$. Letting $n$ get bigger, and using a diagonal argument, we may extract another subsequence (that we will denote in the same way) such that $\{\varphi_k(u_k)\}$ converges uniformly on compact sets to $\widetilde{v}$. We can also deduce that the same subsequence of $\{u_k\}$ converges uniformly to $v = \widetilde{v}^{1/m}$, since
  \[\begin{aligned}
      |u_k - v| = |\beta_k(\varphi_k(u_k)) - \beta_k(\varphi_k(v))| & \leq C |\varphi_k(u_k) - \varphi_k(v)|                                                      \\
                                                                    & \leq |\varphi_k(u_k) - \varphi_\infty(v)| +|\varphi_k(v) - \varphi_\infty(v)| \rightarrow 0
    \end{aligned}\]
  as $k \rightarrow \infty$ uniformly in $k$, due to $u_k$ and $v$ all satisfying the same uniform bounds on $B_n \times [\frac{1}{n},T]$, using that $\beta'_k$ is uniformly bounded and the uniform convergence \eqref{limitphi2}.

  \noindent \textsc{Global $L^1$ convergence.}
  Using again the tail control \eqref{tailestimate} on $u_k$, we observe that
  \begin{equation} \label{ktailcontrol}
    \int_{\mathbb{R}^N\backslash B_{2R}} u_k(t)  \leq \int_{\mathbb{R}^N\backslash B_{R}} u_k(0) + C \frac{t}{R^{N/\alpha}} \|u_0\|_1^m  = \int_{\mathbb{R}^N\backslash B_{k^{\alpha/N} R}} u_0 + C \frac{t}{R^{N/\alpha}} \|u_0\|_1^m.
  \end{equation}
  This estimate gives a lot of information about the sequence of solutions, since it is uniform in $k$ (in fact gets better as $k$ grows). In a way, we could have foreseen this: our equation may not be invariant under the scalings we performed, but we can control the key components of the equation (the kernel and the nonlinearity) by other elements that are actually invariant. Using Fatou's lemma in the expression we get
  \begin{equation}\label{vtailcontrol}
    \int_{\mathbb{R}^N\backslash B_{2R}} v(t) \leq  C \frac{t}{R^{N/\alpha}} \|u_0\|_1^m.
  \end{equation}

  With these tail control estimates we can prove that $u_k \to v$ in $L^\infty_{loc}((0,\infty):L^1(\mathbb{R}^N))$. Let us choose $\varepsilon>0$. We want to show that if $k$ is sufficiently big,
  \[\int_{\mathbb{R}^N} |u_k(x,t) - v(x,t)|\,dx < \varepsilon, \]
  where $k$ is independent of $\frac{1}{T}<t<T$, for some arbitrary $T>0$. It is trivial to see that
  \[\int_{\mathbb{R}^N} |u_k(x,t) - v(x,t)|\,dx \leq \int_{B_R} |u_k(x,t) - v(x,t)|\,dx + \int_{\mathbb{R}^N \backslash B_R} |u_k(x,t)|\,dx+ \int_{\mathbb{R}^N \backslash B_R} |v(x,t)|\,dx.\]
  Take $R$ big enough so that the second and third integrals are smaller than $\varepsilon/3$, thanks to \eqref{ktailcontrol} and \eqref{vtailcontrol}. It is simple to observe that the estimate is independent of $k$. Now that we have fixed $R$, we use that $u_k \rightarrow v$ in compact sets to bound the first integral by $\varepsilon/3$, and thus completing the proof. From this we can also conclude that $v$ has constant mass $M$ and is in $\cont((0,\infty):L^1(\mathbb{R}^N))$, since all $u_k$ verify the same. 

  \noindent \textsc{Converging to a weak solution.}
  Now let us show that $v$ is a weak solution. We will check first that it is a very weak solution, that is, the expression
  \[\int_0^\infty \int_{\mathbb{R}^N} v(x,t) \partial_t \zeta (x,t) \, dxdt + \int_0^\infty \int_{\mathbb{R}^N} v^m(x,t) (-\Delta)^{\sigma/2}\zeta(x,t)\,dxdt = 0\]
  is satisfied for every $\zeta \in \mathcal{C}^\infty_c(\mathbb{R}^N\times(0,\infty))$. We need that
  \[\begin{aligned}
      \lim_k \bigg( & \int_0^\infty \int_{\mathbb{R}^N} (v(x,t)-u_k(x,t)) \partial_t \zeta (x,t) \, dxdt                                                                        \\
      +             & \int_0^\infty \int_{\mathbb{R}^N} \left(v^m(x,t) (-\Delta)^{\sigma/2}\zeta(x,t) - \varphi_k(u_k(x,t)) \mathcal{L}_k\zeta(x,t) \right) \,dxdt  \bigg) = 0.
    \end{aligned}\]
  That the first integral goes to zero is trivial because we have uniform convergence on compact sets. The second integral requires more work. We calculate,
  \begin{equation} \label{firstsecondintegral}
    \begin{aligned}
      \int_0^\infty \int_{\mathbb{R}^N} & \Big(v^m(x,t) (-\Delta)^{\sigma/2} \zeta(x,t) - \varphi_k(u_k(x,t)) \mathcal{L}_k\zeta(x,t) \Big)\,dxdt  =             \\
                                        & \int_0^\infty \int_{\mathbb{R}^N} \Big(v^m(x,t)- \varphi_k(u_k(x,t) \Big) (-\Delta)^{\sigma/2}\zeta(x,t)\, dx dt       \\
      +                                 & \int_0^\infty \int_{\mathbb{R}^N} \varphi_k(u_k(x,t)) \Big((-\Delta)^{\sigma/2}-\mathcal{L}_k\Big)\zeta(x,t) \, dxdt .
    \end{aligned}
  \end{equation}
  For the first integral term, since the sequence $\{\varphi_k(u_k)\}$ is uniformly bounded in $L^\infty((\mathbb{R}^N)\times[\nu, \infty))$ (we take $\nu$ as small as needed so it lies outside the support of $\zeta$), we have weak* convergence of a subsequence in $L^\infty((\mathbb{R}^N)\times[\nu, \infty))$. We have uniform convergence in compact sets to $v^m$, so this weak* limit must be $v^m$. We took these extra steps to get weak convergence because it is global, and the local uniform convergence may not be enough due to $(-\Delta)^{\sigma/2}\zeta$ not having spatial compact support in general. For every $\phi \in \mathcal{C}^\infty_c(\mathbb{R}^N)$ there exists a constant $C_\phi$ such that
  \[\left|(-\Delta)^{\sigma/2}\phi (x) \right| \leq  \frac{C_\phi }{(1+|x|^2)^{\frac{N+\sigma}{2}}}\in L^1(\mathbb{R}^N).\]
  Observe that same estimate, independent of $k$, holds for $\mathcal{L}_k$ thanks to \eqref{kkernels}.
  Therefore, we obtain that the first integral term on the right-hand side of \eqref{firstsecondintegral} tends to zero with $k$. For the second integral term we use the pointwise convergence
  \[\lim_{k \to \infty} \Big((-\Delta)^{\sigma/2}-\mathcal{L}_k\Big)\zeta(x,t) = 0.\]
  and the dominated convergence theorem. The proof of this is simple using \eqref{second_differences}, the expression of the operator in terms of second differences,
  \[\begin{aligned}
      \left|\Big((-\Delta)^{\sigma/2}-\mathcal{L}_k\Big)\zeta(x,t)\right| & \leq \frac{1}{2} \int_{\RR^N} |\zeta(x+z,t) + \zeta(x-z,t) -2 \zeta(x,t)|\,|J_k(x, x+z)-J_\infty(z)| dz                     \\
                                                                          & \leq C \int_{|z|\leq r} \frac{|z|^{2}}{|z|^{N+\sigma}} dz + C \int_{|z|\geq r} |J_k(x, x+z) - J_\infty(z)|dz < \varepsilon.
    \end{aligned}\]
  This last inequality is true as long as we choose $r$ small enough so that this first integral is smaller than $\varepsilon/2$, and then $k$ large enough so that this second integral is smaller than $\varepsilon/2$ thanks to \eqref{limitJ2}. This convergence is uniform on compact intervals of time. All in all, using this convergence, that $\big((-\Delta)^{\sigma/2}-\mathcal{L}_k\big)\zeta$ belongs uniformly to $L^1(Q)$, and the fact that $\varphi_k(u_k)$ is uniformly bounded, we get that this second integral term tends to zero too. With this, we have that $v$ is a very weak solution.

  For $v$ to be actually a weak solution we need to check that it belongs to the proper energy space, $v^m \in L^2_{\text{loc}}((0,\infty); \dot{H}^{\sigma/2})$. Solutions $u_k$ verify the inequality
  \[\int_{\frac{1}{T}}^T \overline{\mathcal{E}}_k(\varphi_k(u_k(t))) \, dt \leq \left\|\varphi_k\left(u_k\left( 1/T \right) \right) \right\|_{\infty} \left\|u_k\left( 1/T \right) \right\|_{1} \leq C\quad \text{ uniformly for }k\geq 1,\]
  for any $T>1$. This, at least formally, can be quickly proved using $\varphi(u)$ as a test function; while the rigorous argument requires the standard Steklov averages. The right-hand side does not depend on $k$, taking advantage of the $L^1$ and $L^\infty$ bounds we have been using so far. Then, thanks to \eqref{kkernels},
  \[\int_{\frac{1}{T}}^T \big[\varphi_k(u_k(t))) \big]_{\dot{H}^{\sigma/2}(\RR^N)}^2 \, dt \leq C \quad \text{ uniformly for }k\geq 1. \]
  We now use Fatou's Lemma and inequalities \eqref{sobolevinequalities} to prove that $v^m \in L^2_{\text{loc}}((0,\infty): \dot{H}^{\sigma/2})$. In the case $N=\sigma$ we don't actually use the inequality \eqref{sobolevinequalities}, but instead we use equiboundedness of $\varphi_k(u_k(t))$ for $t\geq 1/T$. Hence, $v$ is a weak solution.

  \noindent \textsc{Identifying the Barenblatt solution.}
  It only remains to identify $v$ as the Barenblatt solution~$B_M$. Using \cite[Theorem 7.1]{VazquezBarenblattnolocal}, it is sufficient to check that $v$ takes $M \delta_0$ as the initial datum, that is, for any $\phi \in \mathcal{C}_c(\mathbb{R}^N)$,
  \[\lim_{t\rightarrow 0} \int_{\mathbb{R}^N} v(x,t) \phi(x) \, dx = M\phi(0).
  \]
  Let us choose $\varepsilon>0$. Thanks to uniform continuity, we may take $\delta>0$ such that $\|\phi(\cdot)- \phi(0)\|_\infty < \varepsilon/(2M)$ as long as $|x| < \delta$. Then,
  \[\begin{aligned}
      \left|\int_{\mathbb{R}^N} v(x,t) \phi(x) \, dx - M\phi(0) \right| & = \left|\int_{\mathbb{R}^N} v(x,t) \big(\phi(x)  -\phi(0) \big) \, dx \right|                                                                                 \\
                                                                        & \leq  \left|\int_{B_\delta} v(x,t) (\phi(x)  -\phi(0)) \, dx \right| +  \left|\int_{\mathbb{R}^N\backslash B_\delta} v(x,t) (\phi(x)  -\phi(0)) \, dx \right| \\
                                                                        & \leq \frac{\varepsilon}{2} +\|\phi(\cdot)- \phi(0)\|_\infty  \int_{\mathbb{R}^N\backslash B_\delta} v(x,t)                                                    \\
                                                                        & \leq \frac{\varepsilon}{2} +C M^m\|\phi(\cdot)- \phi(0)\|_\infty  \frac{t}{\delta^{N/\alpha}} < \varepsilon,
    \end{aligned}\]
  taking $t<\rho \leq \frac{\varepsilon\delta^{N/\alpha}}{2C M^m\|\phi(\cdot)- \phi(0)\|_\infty}$. Notice that, since we identified the limit of the subsequence of $\{u_k\}$ as $B_M$, and it did not depend on said subsequence, we know that the original sequence of scalings $\{u_k\}$ has $B_M$ as its limit.

  The final step is standard: we take $t=1$ and change the notation so that $k=t$. We define the new variable $\xi=xt^{\alpha/N}$ and use the self-similar structure of $B_M$ to get
  \[|t^{\alpha}u(xt^{\alpha/N}, t) - B_M(x,1)| = t^{\alpha} |u(\xi, t) - B_M(\xi,t)|.\]
  Both the $L^1$ and $L^\infty$ convergence follow from this expression.
\end{proof}

\begin{Remark}
  A global convergence result in $L^\infty$ would only say, for outer scales $|x|\gg t^{\alpha/N}$ that $u(t) = o(t^{-\alpha})$, and would give neither a sharp decay rate, nor an asymptotic profile. The behaviour for outer scales for general initial data is known to be a hard problem even for the standard local heat equation.
\end{Remark}

\appendix

\section*{Appendix: Conservation of mass and extinction}\label{conservationofmass}

\renewcommand{\theTheorem}{A.\arabic{Theorem}}
\renewcommand{\theRemark}{A.\arabic{Remark}}
\renewcommand{\theequation}{A.\arabic{equation}}

\setcounter{Theorem}{0}
\setcounter{Remark}{0}
\setcounter{equation}{0}

From now on we consider general sign-changing solutions.

We show that solutions to our equation conserve mass whenever $|\varphi(s)|\leq C |s|^{m_c}$. Recall the critical exponent $m_c= \frac{(N-\sigma)_+}{N}$. This was already known to happen if $\varphi(s)\leq C|s|^m$ for $m>m_c$, as shown in \cite{dePabloQuiros2016}, and conjectured for $m=m_c$ in the same paper. For the fractional Laplacian both cases were proved in \cite{dePabloQuiros2012}. Our proof for the case $m=m_c$ is a simplified version of the proof found in this last paper, which relied on writing $(-\Delta)^{\sigma/2} = -\Delta (-\Delta)^{-\frac{2-\sigma}{2}}$. We avoid using the Laplacian so that the result can be extended to more general nonlocal operators.

\begin{Theorem}[Conservation of mass]\label{masscons}
  Let $u$ be a bounded weak solution of \eqref{Cauchy}. If $|\varphi(s)|\leq C|s|^{m_c}$, then
  \[\int_{\mathbb{R}^N}u(t) = \int_{\mathbb{R}^N} u_0,\]
  for every $t>0$.
\end{Theorem}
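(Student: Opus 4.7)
The plan is to test the very weak formulation of the equation against a smooth cutoff $\phi_R(x) = \psi(|x|/R)$ with $\psi \in \cont^\infty_c([0,\infty))$, $\psi \equiv 1$ on $[0,1]$ and $\psi \equiv 0$ on $[2,\infty)$. Using \eqref{salto_tiempo} with this choice of $\phi$,
\[
\int_{\RR^N} u(t)\phi_R \, dx - \int_{\RR^N} u_0\phi_R \, dx = -\int_0^t \int_{\RR^N} \varphi(u(s))\mathcal{L}\phi_R\, dx\, ds.
\]
Since $u(t),u_0\in L^1(\RR^N)$ and $\phi_R\to 1$ pointwise with $|\phi_R|\le 1$, the left-hand side tends to $\int u(t) - \int u_0$ by dominated convergence, so conservation of mass reduces to showing that the right-hand side vanishes as $R\to\infty$.

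The ingredients I would exploit are: (i) from $|\varphi(s)|\le C|s|^{m_c}$ and the comparison principle $\|u(s)\|_1\le\|u_0\|_1$, one has $\|\varphi(u(s))\|_{L^{1/m_c}}\le C$ uniformly in $s$; (ii) scaling of the kernel gives $\|\mathcal{L}\phi_R\|_{L^q} \le CR^{N/q-\sigma}$ for every $q\ge 1$ (as in the proof of \Cref{tailcontrol}), and in particular $\|\mathcal{L}\phi_R\|_{L^{N/\sigma}}\le C$ uniformly in $R$. The conjugate exponents $1/m_c$ and $N/\sigma$ are both scale-critical, so a direct H\"older pairing yields only a uniform bound on the right-hand side; this criticality is the central obstacle that makes the case $m=m_c$ delicate, and is the main reason the result could not be obtained by a straightforward adaptation of the supercritical argument in~\cite{dePabloQuiros2016}.

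I would overcome it by a two-scale spatial split at the intermediate radius $\rho = R^{1/2}$. On $B_\rho$ (a region where $\phi_R\equiv 1$ for large $R$) one has the sharper pointwise bound $|\mathcal{L}\phi_R(x)|\le CR^{-\sigma}$, so that H\"older yields
\[
\left|\int_{B_\rho}\varphi(u(s))\mathcal{L}\phi_R\, dx\right| \le CR^{-\sigma}|B_\rho|^{1-m_c}\|\varphi(u(s))\|_{L^{1/m_c}} \le CR^{-\sigma/2},
\]
uniformly in $s$. On $\{|x|>\rho\}$, I would pair the critical-exponent bound on $\mathcal{L}\phi_R$ with the $L^{1/m_c}$ tail norm of $\varphi(u(s))$:
\[
\left|\int_{|x|>\rho}\varphi(u(s))\mathcal{L}\phi_R\, dx\right| \le C\|\varphi(u(s))\|_{L^{1/m_c}(|x|>\rho)} \le C\Bigl(\int_{|x|>\rho} u(s)\,dx\Bigr)^{m_c}.
\]
The last quantity is small uniformly in $s\in[0,t]$, since the trajectory $\{u(s)\}_{s\in[0,t]}$, being a continuous image of a compact interval in $L^1(\RR^N)$, is $L^1$-compact, hence uniformly tight.

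Combining both estimates and integrating over $[0,t]$, the right-hand side of the weak identity is bounded by $Ct(R^{-\sigma/2} + \varepsilon(\rho))$ with $\varepsilon(\rho)\to 0$, and letting $R\to\infty$ closes the argument. The hardest part is conceptual rather than technical: identifying that the scale criticality of the H\"older pairing can be broken by harvesting the sharper pointwise decay of $\mathcal{L}\phi_R$ on compact sets together with the uniform $L^1$-tightness of $\{u(s)\}$ on large sets. This two-scale splitting is the mechanism that replaces the operator factorization $(-\Delta)^{\sigma/2}=-\Delta(-\Delta)^{(\sigma-2)/2}$ used in~\cite{dePabloQuiros2012}, which is unavailable for our general $\mathcal{L}$.
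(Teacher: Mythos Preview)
Your argument is correct for $m_c>0$ and is essentially the same two-scale splitting as the paper's proof. The paper splits $u(s)$ into its restriction to $B_{R_0}$ and its complement, with $R_0$ fixed independently of $R$, and then sends $R\to\infty$ followed by $R_0\to\infty$; you instead split the domain of integration at $\rho=\sqrt{R}$ and take a single limit. The inner estimate (pairing $\|\mathcal{L}\phi_R\|_\infty\le CR^{-\sigma}$ with H\"older on the ball, producing a factor $\rho^\sigma$) and the outer estimate (pairing the scale-invariant bound $\|\mathcal{L}\phi_R\|_{N/\sigma}\le C$ with the $L^{1/m_c}$ tail of $\varphi(u)$) are identical in both proofs. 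Your explicit invocation of uniform $L^1$-tightness of the trajectory $\{u(s)\}_{s\in[0,t]}$, obtained from compactness of the continuous image in $L^1(\RR^N)$, is in fact a cleaner justification than the paper's presentation, which writes the split only at the single time $t$ but implicitly needs it uniformly in $s$.

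One omission: you do not treat the case $m_c=0$ (i.e.\ $N\le\sigma$, which can occur only for $N=1$). There the hypothesis becomes $|\varphi|\le C$, the exponent $1/m_c$ degenerates to $\infty$, and your H\"older pairing is no longer available. The paper handles this separately: when $N<\sigma$ one simply uses $\|\mathcal{L}\phi_R\|_1\le CR^{N-\sigma}\to 0$, and when $N=\sigma$ one sends $R\to\infty$ to kill the inner term and then uses the $L^1$ tail decay of $\mathcal{L}\phi_1$ (by sending $R_0\to\infty$) to kill the outer one.
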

\begin{proof} We will divide the proof in two cases.

  \noindent \textsc{Case 1:} $m_c>0$. Define a radially non-increasing cut-off function $\phi_1 \in \mathcal{C}^\infty_c(\mathbb{R}^N)$ such that $\phi_1 \equiv 1$ in $B_1$ and $\phi_1\equiv 0$ in $\mathbb{R}^N \backslash B_2$. We then define $\phi_R (x) = \phi_1(\frac{x}{R})$. It is easy to see that $\mathcal{L}\phi_1 \in L^1(\mathbb{R}^N)\cap L^\infty(\mathbb{R}^N)$. Then,
  \[\mathcal{L}\phi_R(x) = R^{-\sigma} \widetilde{\mathcal{L}} \phi_1 \left(\frac{x}{R} \right) \]
  for $\widetilde{\mathcal{L}}$ the operator with kernel $\widetilde{J}(x,y) = R^{N+\sigma}J(x/R,y/R)$, which satisfies the same conditions \eqref{nucleo} as $J$. Hence, $\widetilde{\mathcal{L}}\phi_1 \in L^1(\mathbb{R}^N)\cap L^\infty(\mathbb{R}^N)$, and we get that $\|\mathcal{L}\phi_R\|_q = C R^{-\sigma + N/q}$ for every $1 \leq q \leq \infty$. Let us fix $t>0$. We observe that for every $\delta>0$ we can choose $R_0$ such that
  \[\int_{\RR^N \setminus B_{R_0}} |u(t)| \leq \delta. \]
  We divide $u(t)$ as the sum of two functions with disjoint support:
  \[u(t) = u_1 + u_2, \quad u_1 = u(t) \mathds{1}_{B_{R_0}}, \quad u_2 = u(t)-u_1.\]
  The idea behind this separation is that $u_1$ is going to be a good function, and $u_2$ will not be as good but satisfies
  \[\int_{\mathbb{R}^N} |u_2| \leq \delta.\]
  Another important detail is that, due to the disjoint support of these functions and the fact that $\varphi(0) =0$, we have $\varphi(u(t)) = \varphi(u_1) + \varphi(u_2)$. Introducing this into \eqref{salto_tiempo} we get
  \[\left|\int_{\mathbb{R}^N} u(t) \phi_R- \int_{\mathbb{R}^N} u_0 \phi_R \right| \leq \int_0^t \int_{\mathbb{R}^N} |\varphi(u_1)| \, |\mathcal{L}\phi_R| + \int_0^t \int_{\mathbb{R}^N} |\varphi(u_2)| \, |\mathcal{L}\phi_R| = I_1 + I_2.\]
  In order to estimate $I_{1}$ we exploit the fact that $\varphi(u_1) \in L^1(\RR^N)$ thanks to the compact support of $u_1$, so that
  \begin{equation}
    I_1 \leq C t R^{-\sigma} \|\varphi(u_1)\|_1 \leq  C t R^{-\sigma} \|u_1^{m_c}\|_{L^1(B_{R_0})} \leq C t R^{-\sigma} R_0^{\sigma} \|u_0\|_1^{m_c} \leq C t \left(\frac{R_0}{R}\right)^\sigma \|u_0\|_1^{m_c} .
  \end{equation}
  We have used Hölder's inequality with exponents $1/m_c$ and $1/(1-m_
    c)$ for $u$ and $\mathds{1}_{B_{R_0}}$ respectively. The estimate we need for $I_2$ comes again from a Hölder's inequality,
  \[\int_0^t \int_{\mathbb{R}^N} |\varphi(u_2)| \, |\mathcal{L}\phi_R| \leq  Ct \|u_2\|_1^{m_c} \|\mathcal{L}\phi_R\|_{\frac{N}{\sigma}} \leq Ct\delta^{m_c}.\]
  All in all,
  \[ \left|\int_{\mathbb{R}^N} u(t) \phi_R- \int_{\mathbb{R}^N} u_0 \phi_R \right| \leq Ct \left[ \left(\frac{R_0}{R}\right)^\sigma \|u_0\|_1^{m_c} + \delta^{m_c} \right].\]
  Take limits when $R\rightarrow \infty$ first and then $\delta \rightarrow 0$ to finish the proof.

  \noindent \textsc{Case 2:} $m_c=0$. It follows similarly to the previous one with a few changes. The estimate for $I_1$ is altered slightly,
  \[I_1 \leq Ct \frac{R_0^N}{R^\sigma},\]
  which still goes to zero when $R\to \infty$. The estimate for $I_2$ is the one that suffers the biggest change and we have to differentiate the cases where $N<\sigma$ and $N=\sigma$.

  If $N<\sigma$, since $\|\mathcal{L}\phi_R\|_1 = CR^{N-\sigma}$,
  \[I_2 = \int_0^t \int_{\RR^N} |\varphi(u_2)|\,|\mathcal{L}\phi_R| \leq Ct \int_{\RR^N}|\mathcal{L}\phi_R| \leq Ct R^{N-\sigma},\]
  and thus, taking $R\to \infty$ finishes the argument.

  If $N=\sigma$, notice that $\|\mathcal{L}\phi_R\|_1 = C$, that is, the integral does not depend on $R$. Using that $\varphi(u_2) = \mathds{1}_{\RR^N\setminus B_{R_0}}$, we get
  \[I_2 = \int_0^t \int_{\RR^N} |\varphi(u_2)|\,|\mathcal{L}\phi_R| \leq Ct \int_{\RR^N \setminus B_{R_0}}|\mathcal{L}\phi_1|. \]
  Now, taking the limit $R \to \infty$ makes $I_1$ go to zero. Then we take the limit $R_0 \to \infty$, forcing $I_2$ to go to zero too.
\end{proof}

\begin{Remark}
  Notice that in the case $N\leq \sigma$, for any given nonlinearity $\varphi$, any bounded solution conserves mass. This is due to the restriction here being $|\varphi(s)| \leq C$ for $s\geq 0$. If the solution is bounded, we can change the nonlinearity $\varphi(s)$ for big enough values of $|s|$ so that it is bounded too, and thus satisfies the hypothesis.
\end{Remark}

Let us now show that if the nonlinearity is singular enough (meaning $\varphi'(s)\geq |s|^{m-1}$ for $0<m<m_c$), then solutions are identically zero after a certain time. We split the initial condition $u_0$ into its positive part $u_0^+$ and negative part $u_0^-$. A comparison principle tells us that if this extinction is true for solutions starting from $u_0^+$ and $u_0^-$, it is also true for the original solution. Hence, we can assume that solutions are nonnegative without loss of generality.

\begin{Theorem}[Extinction]\label{extinction}
  Let $u$ be a nonnegative weak solution of \eqref{Cauchy} and $0<m<m_c$. If $ \varphi'(s) \geq c s
      ^{m-1}$ and $u_0 \in L^1(\mathbb{R}^N) \cap L^\infty(\mathbb{R}^N)$, then there is a time $T>0$ such that $u(x,t) = 0$ a.e.\@ in $\mathbb{R}^N$ for all $t\geq T$.
\end{Theorem}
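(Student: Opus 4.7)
The plan is to derive a differential inequality
$y'(t)\leq -C\,y(t)^{\gamma}$
with $\gamma<1$ for the quantity $y(t)=\int_{\RR^N}u(t)^{p}\,dx$ with $p>1$ to be chosen large; integrating it forces $y$ to vanish at some explicit finite time $T\leq y(0)^{1-\gamma}/(C(1-\gamma))$, and nonnegativity then yields $u(\cdot,T)\equiv 0$, which is preserved for later times by the comparison principle. Observe that the hypothesis $m<m_{c}$ forces $m_{c}>0$ and hence $N>\sigma$, so the Sobolev embedding in \eqref{sobolevinequalities} is at our disposal.

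Using (a Steklov-regularized, spatially truncated approximation of) $u^{p-1}$ as test function in \eqref{salto_tiempo} and passing to the limit leads to
\begin{equation}
\frac{1}{p}\,\frac{d}{dt}\|u(t)\|_{p}^{p} \;=\; -\mathcal{E}\bigl(\varphi(u(t)),u(t)^{p-1}\bigr).
\end{equation}
A Cauchy--Schwarz argument on the Newton--Leibniz integrals of $\varphi(a)-\varphi(b)$ and $a^{p-1}-b^{p-1}$, combined with the lower bound $\varphi'(s)\geq cs^{m-1}$, yields the Stroock--Varopoulos-type pointwise estimate
\begin{equation}
\bigl(\varphi(a)-\varphi(b)\bigr)\bigl(a^{p-1}-b^{p-1}\bigr)\;\geq\;\frac{4c(p-1)}{(m+p-1)^{2}}\Bigl(a^{(m+p-1)/2}-b^{(m+p-1)/2}\Bigr)^{2}.
\end{equation}
Integrating against $J(x,y)\,dx\,dy$ and applying, in succession, the kernel bounds \eqref{nucleo} and the Sobolev embedding, one obtains
\begin{equation}
\mathcal{E}\bigl(\varphi(u),u^{p-1}\bigr)\;\geq\;C\bigl[u^{(m+p-1)/2}\bigr]^{2}_{\dot H^{\sigma/2}(\RR^N)}\;\geq\;C'\,\|u\|_{q}^{m+p-1},\quad q=\tfrac{N(m+p-1)}{N-\sigma}.
\end{equation}

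The standard $L^{1}$-contraction (from the comparison principle) gives $\|u(t)\|_{1}\leq\|u_{0}\|_{1}$; H\"older interpolation $\|u\|_{p}\leq\|u\|_{1}^{\theta}\|u\|_{q}^{1-\theta}$ with $\tfrac{1}{p}=\theta+\tfrac{1-\theta}{q}$ then converts the previous estimates into
\begin{equation}
y'(t)\;\leq\; -C''\,y(t)^{\gamma},\qquad \gamma=\frac{m+p-1}{p(1-\theta)}.
\end{equation}
A short computation gives $p\theta=(q-p)/(q-1)\nearrow \sigma/N$ as $p\to\infty$, so $\gamma<1$ (equivalently $p\theta<1-m$) holds for every sufficiently large $p$ precisely because $m<m_{c}=(N-\sigma)/N$. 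Since $u_{0}\in L^{1}\cap L^{\infty}$ we have $y(0)<\infty$, and the ODI yields the desired extinction time. The only technical obstacle is the rigorous justification of the energy identity, i.e., the use of $u^{p-1}$ as a test function; this is carried out via Steklov averaging in time together with a cut-off in space, mirroring the approximation arguments in \cite{BenilanCrandallContinuousDependence,dePabloQuiros2012}.
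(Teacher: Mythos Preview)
Your proof is correct and follows the same core strategy as the paper: test the equation against $u^{p-1}$, apply a Stroock--Varopoulos inequality together with the Sobolev embedding (available since $m<m_c$ forces $N>\sigma$), and deduce a sublinear differential inequality for $\|u(t)\|_p^p$. The only notable variation is the choice of $p$. The paper takes the specific value $p=(1-m)N/\sigma$, for which the Sobolev exponent $q=\tfrac{N(m+p-1)}{N-\sigma}$ coincides with $p$; this yields $J'(t)+CJ(t)^{(N-\sigma)/N}\le 0$ directly, with no interpolation. Your route instead takes $p$ large and interpolates $L^p$ between $L^1$ and $L^q$. This costs an extra step but buys something in return: with $p$ large you have $p\ge 2$, so $s\mapsto s^{p-1}$ is Lipschitz on bounded sets and $u^{p-1}$ sits in the energy space automatically, whereas the paper---working with the exact $p$, which satisfies $1<p<2$ when $m^*<m<m_c$---must introduce the additional regularization $f_\varepsilon(s)=(s+\varepsilon)^{p-1}-\varepsilon^{p-1}$ and send $\varepsilon\to 0$ afterwards.
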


\begin{proof} A formal proof follows from mutiplying the equation by $u^{p-1}$ with $p=(1-m)N/\sigma$ and proceeding as explained in \cite{dePabloQuiros2012}. Then, for $J(t) = \|u\|_p^p$ we get
  \begin{equation}\label{extinctionODE}
    J'(t) + C J^{\frac{N-\sigma}{N}}(t) \leq 0
  \end{equation}
  for $C>0$, and from this ODE we can conclude the result. Since we want to extend the proof to be valid for a more general nonlinearity $\varphi(s)$ that is not necessarily a power, we will explain the rigorous details. Basically, we have to justify that from expression \eqref{salto_tiempo},
  choosing an appropiate test function $\phi \in \mathcal{C}^\infty_c(\mathbb{R}^N)$, we get
  \begin{equation} \label{discreto_final}
    \int_{\mathbb{R}^N} u(\tau+h)^p-\int_{\mathbb{R}^N}u(\tau)^p  + C \int_\tau^{\tau+h} \left(\int_{\mathbb{R}^N} u(t)^p \right)^{\frac{N-\sigma}{N}} \, dt  \leq 0.
  \end{equation}
  Then we get \eqref{extinctionODE} in a distributional sense and the same conclusion follows, exactly as in the local case, see~\cite{BenilanCrandallContinuousDependence}. The main difference between the formal and rigorous proof is the lack of both temporal and spatial regularity. The temporal regularity problem is solved by using the Steklov average for $\delta>0$,
  \[u^\delta(t) = \frac{1}{\delta}\int_t^{t+\delta} u(s)\, ds,\]
  and the spatial regularity is solved by using the approximation explained below.

  Notice that $u \in L_\text{loc}^2((0,\infty):\dot{H}^{\sigma/2}(\RR^N))$, since $\beta = \varphi^{-1}$ satisfies $\beta(0)=0$ and $\beta' \in \cont^1(\RR)$. If $p\geq 2$, then we can easily see that the function $u^{p-1}$ belongs to the energy space $L_\text{loc}^2((0,\infty):\dot{H}^{\sigma/2}(\RR^N))$. The condition $p\geq 2$ is equivalent to $m \leq m^{*}=\frac{N-2\sigma}{N}$. But $m^{*}<m_c$, so we could find ourselves in the situation that $m^{*} < m < m_c$, which would translate into $1<p<2$. Since we cannot be sure that $u^{p-1}$ belongs to the energy space in this case, we have to treat this with more care. In order to do so, let us define $f_\varepsilon(s) = (s+\varepsilon)^{p-1}-\varepsilon^{p-1}$.
  Now we can use $f_\varepsilon \left( u^\delta(t) \right)$ as a test function, since we have removed the singularity of the power at the origin. Introducing it in \eqref{salto_tiempo} we get
  \begin{equation} \label{extincion_intermedia}
    \int_{\mathbb{R}^N} \partial_t(u^\delta(t)) f_\varepsilon \left( u^\delta(t) \right) +
    \frac{1}{\delta}\int_t^{t+\delta}\mathcal{E} \Big(\varphi(u(s)), f_\varepsilon \left( u(s) \right) \Big) \, ds = 0.
  \end{equation}
  Notice that $\partial_t(u^\delta(t)) = \frac{u(t+\delta)-u(t)}{\delta}$. Let us estimate the first term. Using the chain rule, we get
  \begin{equation} \label{primitiva}
   \partial_t(u^\delta(t)) f_\varepsilon \left( u^\delta(t) \right) = \partial_t F_\varepsilon(u^\delta(t)), \quad \text{where } 
    F_\varepsilon(s) = \int_0^s f_\varepsilon(r) \, dr = \frac{(s+\varepsilon)^{p}}{p}-s\varepsilon^{p-1}.
  \end{equation}
  We need to get rid of the parameter $\delta>0$, and for that, we integrate again in $t$ for $t \in (\tau, \tau+h)$,
  \[\int_{\mathbb{R}^N} F_\varepsilon(u^\delta(\tau+h))-F_\varepsilon(u^\delta(\tau)) +
    \frac{1}{\delta}\int_\tau^{\tau+h}\int_t^{t+\delta}\mathcal{E} \Big(\varphi(u(s)), f_\varepsilon \left( u(s) \right) \Big) \, ds dt = 0.\]
  Now we take $\delta \to 0$,
  \begin{equation} \label{extincion_casifinal}
    \int_{\mathbb{R}^N} F_\varepsilon(u(\tau+h))-F_\varepsilon(u(\tau)) +
    \int_\tau^{\tau+h}\mathcal{E} \Big(\varphi(u(t)), f_\varepsilon \left( u(t) \right) \Big)\, dt = 0.
  \end{equation}
  For the second term we use the Stroock-Varopoulos' inequality,
  \begin{equation} \label{SV}
    \mathcal{E}(F(u),G(u))\geq \overline{\mathcal{E}}(H(u)) \quad \text{whenever } (H')^2 \leq F'G',
  \end{equation}
  see \cite{StroockInequality}, \cite{VaropoulosInequality} and \cite{dePabloBrandleHeatEquations}.
  Take $F:=f_\varepsilon$ and $G:=\varphi$, and
  \[H:=h_\varepsilon(s) = \sqrt{c(p-1)} \int_0^s (r+\varepsilon)^{\frac{p-2}{2}} r^{\frac{m-1}{2} } \, dr \leq  \int_0^s \sqrt{f'_\varepsilon(r)} \sqrt{\varphi'(r)} \, dr. \]
  Then, with \eqref{SV} and the Sobolev embedding,
  \[\mathcal{E} \Big(\varphi(u(t)), f_\varepsilon \left( u(t) \right) \Big) \geq \overline{\mathcal{E}}(h_\varepsilon(t)) \geq C \left(\int_{\RR^N} h_\varepsilon(t)^{\frac{2N}{N-\sigma}}\right)^{\frac{N-\sigma}{N} }. \]
  Introducing this in \eqref{extincion_casifinal} we obtain
  \[ \int_{\mathbb{R}^N} F_\varepsilon(u(\tau+h))-F_\varepsilon(u(\tau)) + C
    \int_\tau^{\tau+h} \left(\int_{\RR^N} h_\varepsilon(t)^{\frac{2N}{N-\sigma}}\right)^{\frac{N-\sigma}{N} } \, dt \leq 0.\]
  Take $\varepsilon \to 0$ and, thanks to our choice of $p$, we get expression \eqref{discreto_final}.
\end{proof}

\begin{Remark}
  An interesting consequence of dividing the initial condition into $u_0^+$ and $u_0^-$ and proving extinction for both initial datum separately is that they may have different extinction times. Therefore, there might be a certain time $T_1$ for which our solution becomes entirely positive or negative, and another $T_2 \geq T_1$ for which the solution goes extinct.
\end{Remark}

\section*{Acknowledgements}

This research has been supported by grants CEX2019-000904-S, PID2020-116949GB-I00, and RED2022-134784-T, all of them funded by MCIN/AEI/10.13039/501100011033. \textsc{F.\! Quirós} and \textsc{J.\! Ruiz-Cases} were also supported  by the Madrid Government (Comunidad de Madrid – Spain) under the multiannual Agreement with UAM in the line for the Excellence of the University Research Staff in the context of the V PRICIT (Regional Programme of Research and Technological Innovation).



\bibliographystyle{acm}

\begin{thebibliography}{25}

  \bibitem{BenilanCrandallContinuousDependence}
  {\sc B\'{e}nilan, P., and Crandall, M.~G.}
  \newblock The continuous dependence on {$\varphi $} of solutions of {$u\sb{t}-\Delta \varphi (u)=0$}.
  \newblock {\em Indiana Univ. Math. J. 30}, 2 (1981), 161--177.
  
  \bibitem{BonforteVazquezQuantitativeEstimates}
  {\sc Bonforte, M., and V\'{a}zquez, J.~L.}
  \newblock Quantitative local and global a priori estimates for fractional nonlinear diffusion equations.
  \newblock {\em Adv. Math. 250\/} (2014), 242--284.
  
  \bibitem{dePabloBrandleHeatEquations}
  {\sc Br\"{a}ndle, C., and de~Pablo, A.}
  \newblock Nonlocal heat equations: regularizing effect, decay estimates and {N}ash inequalities.
  \newblock {\em Commun. Pure Appl. Anal. 17}, 3 (2018), 1161--1178.
  
  \bibitem{FractionalHomogeneousBrascoGomezVazquez}
  {\sc Brasco, L., G\'{o}mez-Castro, D., and V\'{a}zquez, J.~L.}
  \newblock Characterisation of homogeneous fractional {S}obolev spaces.
  \newblock {\em Calc. Var. Partial Differential Equations 60}, 2 (2021), Paper No. 60, 40.
  
  \bibitem{BrezisMironescu2018}
  {\sc Brezis, H., and Mironescu, P.}
  \newblock Gagliardo-{N}irenberg inequalities and non-inequalities: the full story.
  \newblock {\em Ann. Inst. H. Poincar\'{e} C Anal. Non Lin\'{e}aire 35}, 5 (2018), 1355--1376.
  
  \bibitem{CaffarelliChanVasseur}
  {\sc Caffarelli, L., Chan, C.~H., and Vasseur, A.}
  \newblock Regularity theory for parabolic nonlinear integral operators.
  \newblock {\em J. Amer. Math. Soc. 24}, 3 (2011), 849--869.
  
  \bibitem{CrandallMichaelPierre1982}
  {\sc Crandall, M., and Pierre, M.}
  \newblock Regularizing effects for {$u_{t}+A\varphi (u)=0$} in {$L^{1}$}.
  \newblock {\em J. Functional Analysis 45}, 2 (1982), 194--212.
  
  \bibitem{dePabloQuiros2016}
  {\sc de~Pablo, A., Quir\'{o}s, F., and Rodr\'{\i}guez, A.}
  \newblock Nonlocal filtration equations with rough kernels.
  \newblock {\em Nonlinear Anal. 137\/} (2016), 402--425.
  
  \bibitem{dePabloQuiros2018}
  {\sc de~Pablo, A., Quir\'{o}s, F., and Rodr\'{\i}guez, A.}
  \newblock Regularity theory for singular nonlocal diffusion equations.
  \newblock {\em Calc. Var. Partial Differential Equations 57}, 5 (2018), Paper No. 136, 14.
  
  \bibitem{dePabloQuiros2011}
  {\sc de~Pablo, A., Quir\'{o}s, F., Rodr\'{\i}guez, A., and V\'{a}zquez, J.~L.}
  \newblock A fractional porous medium equation.
  \newblock {\em Adv. Math. 226}, 2 (2011), 1378--1409.
  
  \bibitem{dePabloQuiros2012}
  {\sc de~Pablo, A., Quir\'{o}s, F., Rodr\'{\i}guez, A., and V\'{a}zquez, J.~L.}
  \newblock A general fractional porous medium equation.
  \newblock {\em Comm. Pure Appl. Math. 65}, 9 (2012), 1242--1284.
  
  \bibitem{DiBenedettoHarnackBook}
  {\sc DiBenedetto, E., Gianazza, U., and Vespri, V.}
  \newblock {\em Harnack's inequality for degenerate and singular parabolic equations}.
  \newblock Springer Monographs in Mathematics. Springer, New York, 2012.
  
  \bibitem{KassmannFelsinger2013}
  {\sc Felsinger, M., and Kassmann, M.}
  \newblock Local regularity for parabolic nonlocal operators.
  \newblock {\em Comm. Partial Differential Equations 38}, 9 (2013), 1539--1573.
  
  \bibitem{GilbargTrudingerbook}
  {\sc Gilbarg, D., and Trudinger, N.~S.}
  \newblock {\em Elliptic partial differential equations of second order}, second~ed., vol.~224 of {\em Grundlehren der mathematischen Wissenschaften [Fundamental Principles of Mathematical Sciences]}.
  \newblock Springer-Verlag, Berlin, 1983.
  
  \bibitem{GrafakosModern}
  {\sc Grafakos, L.}
  \newblock {\em Modern {F}ourier analysis}, second~ed., vol.~250 of {\em Graduate Texts in Mathematics}.
  \newblock Springer, New York, 2009.
  
  \bibitem{Kassmann2009}
  {\sc Kassmann, M.}
  \newblock A priori estimates for integro-differential operators with measurable kernels.
  \newblock {\em Calc. Var. Partial Differential Equations 34}, 1 (2009), 1--21.
  
  \bibitem{LoeperQuiros2023}
  {\sc Loeper, G., and Quir\'{o}s, F.}
  \newblock Interior second derivatives estimates for nonlinear diffusions.
  \newblock {\em Discrete Contin. Dyn. Syst. 43}, 3-4 (2023), 1547--1559.
  
  \bibitem{MoserEllipticHarnack}
  {\sc Moser, J.}
  \newblock On {H}arnack's theorem for elliptic differential equations.
  \newblock {\em Comm. Pure Appl. Math. 14\/} (1961), 577--591.
  
  \bibitem{StroockInequality}
  {\sc Stroock, D.~W.}
  \newblock {\em An introduction to the theory of large deviations}.
  \newblock Universitext. Springer-Verlag, New York, 1984.
  
  \bibitem{VaropoulosInequality}
  {\sc Varopoulos, N.~T.}
  \newblock Hardy-{L}ittlewood theory for semigroups.
  \newblock {\em J. Funct. Anal. 63}, 2 (1985), 240--260.
  
  \bibitem{VazquezBarenblattlocal}
  {\sc V\'{a}zquez, J.~L.}
  \newblock Asymptotic beahviour for the porous medium equation posed in the whole space.
  \newblock vol.~3. 2003, pp.~67--118.
  \newblock Dedicated to Philippe B\'{e}nilan.
  
  \bibitem{VazquezFastDiffusion}
  {\sc V\'{a}zquez, J.~L.}
  \newblock {\em Smoothing and decay estimates for nonlinear diffusion equations}, vol.~33 of {\em Oxford Lecture Series in Mathematics and its Applications}.
  \newblock Oxford University Press, Oxford, 2006.
  \newblock Equations of porous medium type.
  
  \bibitem{VazquezPorousMedium}
  {\sc V\'{a}zquez, J.~L.}
  \newblock {\em The porous medium equation}.
  \newblock Oxford Mathematical Monographs. The Clarendon Press, Oxford University Press, Oxford, 2007.
  \newblock Mathematical theory.
  
  \bibitem{VazquezBarenblattnolocal}
  {\sc V\'{a}zquez, J.~L.}
  \newblock Barenblatt solutions and asymptotic behaviour for a nonlinear fractional heat equation of porous medium type.
  \newblock {\em J. Eur. Math. Soc. (JEMS) 16}, 4 (2014), 769--803.
  
  \bibitem{dePabloQuiros2017}
  {\sc V\'{a}zquez, J.~L., de~Pablo, A., Quir\'{o}s, F., and Rodr\'{\i}guez, A.}
  \newblock Classical solutions and higher regularity for nonlinear fractional diffusion equations.
  \newblock {\em J. Eur. Math. Soc. (JEMS) 19}, 7 (2017), 1949--1975.
  
  \end{thebibliography}

\end{document}